\numberwithin{equation}{section}
\theoremstyle{plain}\newtheorem{Theorem}{Theorem}[section]
\theoremstyle{plain}\newtheorem{Corollary}[Theorem]{Corollary}
\theoremstyle{plain}\newtheorem{Lemma}[Theorem]{Lemma}
\theoremstyle{plain}\newtheorem{Definition}[Theorem]{Definition}
\theoremstyle{plain}\newtheorem{Proposition}[Theorem]{Proposition}
\theoremstyle{plain}\newtheorem*{Theorem*}{Theorem}
\theoremstyle{remark}
\DeclareMathOperator{\Hom}{Hom}
\DeclareMathOperator{\Ima}{Im}
\DeclareMathOperator{\ind}{ind}
\title{Modulo 2 counting of Klein-bottle leaves in smooth taut foliations}
\author{Boyu Zhang}
\date{}
\begin{document}
\maketitle
\abstract This article proves that the parity of the number of Klein-bottle leaves in a smooth cooriented taut foliation is invariant under smooth deformations within taut foliations, provided that every Klein-bottle leaf involved in the counting has non-trivial linear holonomy.

\section{Introduction} \label{sec: statement of result}
It was proved in \cite{bonatti1994feuilles} that for a cooriented foliation, a $C^0$-generic smooth perturbation destroys all closed leaves with genus greater than $1$. This article explores the other side of the story. It shows that under certain conditions, one cannot get rid of Klein-bottle leaves of a taut foliation by smooth deformations. 

Let $\mathcal{L}$ be a smooth cooriented 2-dimensional foliation on a smooth three manifold $Y$. The foliation $\mathcal{L}$ and the manifold $Y$ are allowed to be unorientable. By definition, the foliation $\mathcal{L}$ is called a taut foliation if for every point $p \in Y$ there exists an embedded circle in $Y$ passing through $p$ and being transverse to $\mathcal{L}$.

\begin{Definition}
Let $K\subset Y$ be a closed leaf of $\mathcal{L}$. The leaf $K$ is called nondegenerate if it has non-trivial linear holonomy.
\end{Definition}

Consider a closed 2-dimensional submanifold $K$ of $Y$. If $K$ is cooriented, one can define an element $PD[K]\in\Hom(H_1(Y;\mathbb{Z});\mathbb{Z})$ as follows. Let $[\gamma]$ be a homology class represented by a closed curve $\gamma$, then $PD[K]$ maps $[\gamma]$ to the oriented intersection number of $\gamma$ and $K$. Since $\Hom(H_1(Y;\mathbb{Z});\mathbb{Z})\cong H^1(Y;\mathbb{Z})$, the element $PD[K]$ can be considered as an element of $H^1(Y;\mathbb{Z})$. If both $Y$ and $K$ are oriented and if the orientations of $Y$ and $K$ are compatible with the coorientation of $K$, the element $PD[K]$ is equal to the Poincar\'e dual of the fundamental class of $K$.

\begin{Definition}
Let $A\in H^1(Y;\mathbb{Z})$. A closed leaf $K$ of $\mathcal{L}$ is said to be in the class $A$ if $PD[K]=A$. The foliation $\mathcal{L}$ is called $A$-admissible if every Klein-bottle leaf of $\mathcal{L}$ in the class $A$ is nondegenerate. 
\end{Definition}

The following result is the main theorem of this article.

\begin{Theorem}\label{thm: main}
Let $A\in H^1(Y;\mathbb{Z})$. Let $\mathcal{L}_s$, $s\in[0,1]$ be a smooth family of coorientable taut foliations on $Y$. Suppose $\mathcal{L}_0$ and $\mathcal{L}_1$ are both $A$-admissible. For $i=0,1$, let $n_i$ be the number of Klein-bottle leaves in the class $A$. Then $n_0$ and $n_1$ have the same parity.
\end{Theorem}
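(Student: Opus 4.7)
The plan is to realize $n_0$ and $n_1$ modulo~$2$ as the boundary count of a compact $1$-dimensional moduli space. Set
\[
\mathcal{M}_A = \{(s,K) : s\in[0,1],\ K \text{ is a Klein-bottle leaf of } \mathcal{L}_s \text{ in class } A\},
\]
topologized using the Hausdorff metric on leaves. After a suitable perturbation of $\{\mathcal{L}_s\}$, fixed at $s=0$ and $s=1$, the goal is to show that $\mathcal{M}_A$ is a compact $1$-manifold whose boundary is exactly the disjoint union of the Klein-bottle leaves of $\mathcal{L}_0$ and $\mathcal{L}_1$ in class $A$; counting boundary points modulo~$2$ then gives $n_0\equiv n_1\pmod 2$.

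For the local structure at a nondegenerate Klein-bottle leaf $(s_0,K_0)$, I would use an implicit function argument on holonomy. Writing $\pi_1(K_0)=\langle a,b\mid bab^{-1}=a^{-1}\rangle$, the relation forces the linear holonomy of $a$ to be trivial (its image in the coorientation group $\mathbb{R}^+$ equals its own inverse), so nondegeneracy is exactly the condition that the linear holonomy of $b$ differs from~$1$. Equivalently, the transverse holonomy of $b$ is a hyperbolic contraction or expansion of the transversal at $K_0$, and the defect map sending a small normal section of $K_0$ to the failure of the $b$-holonomy to have a fixed point becomes invertible. This yields a unique continuation of $K_0$ as a Klein-bottle leaf of $\mathcal{L}_s$ for $s$ near $s_0$. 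The admissibility of $\mathcal{L}_0$ and $\mathcal{L}_1$ then forces every Klein-bottle leaf contributing to $n_0$ or $n_1$ to be the endpoint of a smooth arc in $\mathcal{M}_A$, transverse to the slices $s=0,1$.

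Compactness of $\mathcal{M}_A$ relies on an area bound: tautness of each $\mathcal{L}_s$ supplies a closed transverse $1$-form, against which the class $A$ gives a uniform upper bound on the area of any Klein-bottle leaf in that class. A sequence $(s_n,K_n)$ with $s_n\to s_\infty$ then subconverges in Hausdorff distance to some compact leaf $K_\infty$ of $\mathcal{L}_{s_\infty}$, and one must verify that $K_\infty$ is again a Klein bottle rather than, for example, a torus multiply covered by the $K_n$, or a surface of higher genus. Such topological jumps are codimension-$2$ phenomena in the parameter and can be excluded by the generic perturbation below.

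The main step, and the principal obstacle, is transversality of the path $\{\mathcal{L}_s\}$. I would set up a Fredholm problem on a Banach manifold of smooth coorientable taut foliations and show that, for a residual set of perturbations of the homotopy rel endpoints, the only failures of nondegeneracy at interior values of $s$ are transverse "fold" singularities, where the linear holonomy of the generator $b$ crosses $1$ with nonzero $s$-derivative. At such a fold, the local model of $\mathcal{M}_A$ is a smooth arc tangent to a level set $\{s=s_*\}$, so two nondegenerate branches on one side of $s_*$ annihilate at $s_*$, and the number of Klein-bottle leaves in class $A$ changes by $\pm 2$ or~$0$ as $s$ crosses $s_*$. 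Verifying surjectivity of the linearized operator, controlling tautness and coorientability under the perturbation, and ensuring that tori and other closed-leaf strata do not generically collide with the Klein-bottle stratum constitute the delicate analytic core of the argument; once these are in place, the moduli space $\mathcal{M}_A$ is a compact $1$-manifold with the desired boundary, and the theorem follows.
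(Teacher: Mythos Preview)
Your outline is the natural ``direct'' strategy, but the paper does something entirely different, and the place where your sketch stalls is precisely the place the paper's machinery is designed to bypass.

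The paper never perturbs the foliations. Instead it passes to the orientation double cover $p:\widetilde Y\to Y$ (so Klein bottles become tori), forms the symplectization $X=\widetilde Y\times\mathbb R$ with $\Omega_s=\omega_s+d(t\lambda_s)$, and observes that closed $J_s$-holomorphic tori in $X$ are exactly the lifted closed leaves. The deck transformation induces an involution $\sigma$ on $X$ with $\sigma^*\Omega_s=-\Omega_s$; a $J_s$-holomorphic torus is $\sigma$-fixed if and only if it is the lift of a Klein-bottle leaf. One then perturbs the \emph{almost complex structures} $J_s$ (not the foliations), equivariantly off a neighborhood of the Klein-bottle lifts, to make the parametrized moduli space $\mathcal M(X,\{J'_s\},e)$ a smooth $1$-manifold. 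The non-Klein-bottle tori come in $\sigma$-pairs, and the ends of the noncompact $1$-manifold going off to $t\to\pm\infty$ are also $\sigma$-paired; what remains modulo~$2$ on the boundary is $n_0+n_1$.

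The gap in your proposal is exactly the paragraph you flag as ``the delicate analytic core.'' Perturbing a path of taut foliations rel endpoints so that the Klein-bottle locus becomes a $1$-manifold is not a routine Sard--Smale argument: the integrability constraint $\lambda\wedge d\lambda=0$ means the space of nearby foliations is not modeled on an unconstrained Banach space, and it is not clear that deformations of $\mathcal L_s$ surject onto the cokernel of the linearized holonomy operator at a degenerate Klein bottle. In particular, your assertion that collisions with torus strata and higher-order degenerations are ``codimension-$2$ phenomena'' is exactly what would need to be proved, and there is no off-the-shelf transversality package for closed leaves of foliations that delivers it. The paper sidesteps all of this because perturbations of compatible $J$ are completely unconstrained, Gromov compactness replaces your Hausdorff-limit argument, and the $\sigma$-symmetry converts the Klein-bottle count into a fixed-point count inside a standard pseudoholomorphic moduli problem.

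So: your local analysis (linear holonomy of $a$ trivial, nondegeneracy $=$ hyperbolicity of $b$) is correct and is morally the content of Lemma~\ref{lem: nondegeneracy}, but the global cobordism you want cannot be built by perturbing $\{\mathcal L_s\}$ without substantial new foliation-theoretic input. The symplectization plus $\mathbb Z/2$-equivariant $J$-perturbation is the missing idea.
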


Notice that if there is no Klein-bottle leaf of $\mathcal{L}$ in the homology class $A$, then $\mathcal{L}$ is automatically $A$-admissible. Therefore, the following result follows immediately.

\begin{Corollary} \label{cor: existence of Klein bottle leaf}
Let $A\in H^1(Y;\mathbb{Z})$, and let $\mathcal{L}$ be an $A$-admissible smooth coorientable taut foliation on $Y$. Assume that $\mathcal{L}$ has an odd number of Klein-bottle leaves in the class $A$. Then every smooth deformation of $\mathcal{L}$ through taut foliations has at least one Klein-bottle leaf in the class $A$. \qed
\end{Corollary}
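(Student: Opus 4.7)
The plan is to deduce Corollary~\ref{cor: existence of Klein bottle leaf} directly from Theorem~\ref{thm: main} by a contrapositive argument. The crucial observation, already flagged in the paragraph preceding the corollary, is that the $A$-admissibility hypothesis becomes vacuous for a foliation that has no Klein-bottle leaves in the class $A$: the defining condition ``every Klein-bottle leaf in the class $A$ is nondegenerate'' has an empty quantifier range and so holds trivially. Consequently, \emph{any} coorientable taut foliation on $Y$ with no Klein-bottle leaves in class $A$ is automatically $A$-admissible.

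Concretely, let $\mathcal{L}_s$, $s\in[0,1]$, be a smooth family of coorientable taut foliations on $Y$ with $\mathcal{L}_0=\mathcal{L}$, and suppose, aiming for a contradiction, that $\mathcal{L}_1$ has no Klein-bottle leaf in the class $A$. Then $\mathcal{L}_1$ is $A$-admissible by the observation above, while $\mathcal{L}_0=\mathcal{L}$ is $A$-admissible by the hypothesis of the corollary. Theorem~\ref{thm: main} therefore applies to the family $\mathcal{L}_s$ and yields $n_0\equiv n_1\pmod 2$, where $n_0$ is odd by assumption and $n_1=0$. This is a contradiction, so any smooth deformation of $\mathcal{L}$ through taut foliations must carry at least one Klein-bottle leaf in the class $A$.

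Since the corollary is obtained as a purely formal consequence of Theorem~\ref{thm: main}, there is no genuine obstacle to overcome beyond recognizing that the admissibility condition costs nothing when no Klein-bottle leaves in class $A$ are present to witness it. All of the substantive analytic and topological work, namely setting up the mod~$2$ count and verifying its invariance under smooth deformations of taut foliations, is absorbed into the proof of the main theorem; the corollary simply repackages that invariance as a non-vanishing statement by treating the nondegeneracy assumption as a black box.
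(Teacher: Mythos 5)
Your argument is correct and is exactly the paper's intended proof: the paper notes immediately before the corollary that a foliation with no Klein-bottle leaves in class $A$ is vacuously $A$-admissible, and then declares the corollary an immediate consequence of Theorem~\ref{thm: main}, which is precisely the contrapositive you spell out. Nothing further is needed.
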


It would be interesting to understand whether a similar result holds for torus leaves of taut foliations. For example, suppose $\mathcal{L}_0$ and $\mathcal{L}_1$ are two oriented and cooriented taut foliations on $Y$ that can be deformed to each other through taut foliations. Suppose every closed torus leaf in a homology class $e\in H_2(Y;\mathbb{Z})$ has non-trivial linear holonomy, is it always true that the numbers of torus leaves in the homology class $e$ in $\mathcal{L}_0$ and $\mathcal{L}_1$ have the same parity? The answer to this question is not clear to the author at the time of writing this article.

The article is organized as follows. Sections \ref{sec: moduli spcae} and \ref{sec: symplectization} build up necessary tools for the proof of theorem \ref{thm: main}. Sections \ref{sec: proof} and \ref{sec: technical lemmas} prove the theorem. Section \ref{sec: an example} gives an explicit example of corollary \ref{cor: existence of Klein bottle leaf}, constructing a foliation with a Klein-bottle leaf that cannot be removed by deformations.

I would like to express my most sincere gratitude to Cliff Taubes, who has been consistently providing me with inspiration, encouragement, and patient guidance.

\section{Moduli spaces of $J$-holomorphic tori} \label{sec: moduli spcae}

In \cite{taubes1996counting}, Taubes studied the behaviour of the moduli space of pseudo-holomorphic curves on a compact symplectic 4-manifold, and used it to define a version of Gromov invariant. This section recalls some results from \cite{taubes1996counting} to prepare for the proof of theorem \ref{thm: main}. The moduli space considered here is not exactly the moduli space used in the definition of Taubes's Gromov invariant, but essentially what is developed in this section is a special case of Taubes's result. For a survey on different versions of Gromov invariants of symplectic 4-manifolds based on Taubes's work, see \cite{mcduff1997lectures}.

Let $X$ be a smooth 4-manifold. To avoid complications caused by exceptional spheres, assume throughout this section that $\pi_2(X)=0$. This will be enough for the proof of theorem \ref{thm: main}. Let $J$ be a smooth almost complex structure on $X$.

Consider an immersed closed $J$-holomorphic curve $C$ in $X$. Let $N$ be the normal bundle of $C$, the fiber of $N$ then inherits an almost complex structure from $J$. Let $\pi:N\to C$ be the projection from $N$ to $C$. Choose a local diffeomorphism $\varphi$ from a neighborhood of the zero section of $N$ to a neighborhood of $C$ in $X$, which maps the zero section of $N$ to $C$. 
The map $\varphi$ can be chosen in such a way that the tangent map is $\mathbb{C}$-linear on the zero section of $N$. Every closed immersed $J$-holomorphic curve that is $C^1$-close to $C$ is the image of a section of $N$. Fix an arbitrary connection $\nabla_0$ on $N$ and let $\bar{\partial}_0$ be the $(0,1)$-part of $\nabla_0$. 
If $s$ is a section of $N$ near the zero section, the equation for $\varphi(s)$ to be a $J$-holomorphic curve in $X$ can be schematically written as
\begin{equation} \label{eqn:J holomorphic for sections}
\bar\partial_0 s + \tau(s) (\nabla_0(s)) + Q(s) (\nabla_0(s),\nabla_0(s)) + T(s)=0.
\end{equation}
Here $\tau$ is a smooth section of $\pi^*(\Hom_{\mathbb{R}}(T^*C\otimes_\mathbb{R} N, T^{0,1}C\otimes_{\mathbb{C}}N))$, and $Q$ is a smooth section of $\pi^*(\Hom_{\mathbb{R}}(T^*C\otimes_\mathbb{R} N\otimes_\mathbb{R} N, T^{0,1}C\otimes_{\mathbb{C}}N))$, and $T$ is a smooth section of $\pi^*(T^{0,1}C\otimes_{\mathbb{C}} N)$. The values of $\tau$, $Q$, and $T$ are defined pointwise by the values of $J$ in an algebraic way, and $\tau$, $Q$, $T$ are zero when $s=0$. The linearized equation of \eqref{eqn:J holomorphic for sections} at $s=0$ is
$\bar\partial_0(s)+\frac{\partial{T}}{\partial{s}}(s)=0.$
Define 
\begin{equation} \label{eqn: deformation operator}
L(s):=\bar\partial_0(s)+\frac{\partial{T}}{\partial{s}}(s).
\end{equation}
Notice that $L$ is a real linear operator. The curve $C$ is called nondegenerate if $L$ is surjective as a map from $L_1^2(N)$ to $L^2(N)$. By elliptic regularity, if $C$ is nondegenerate then the operator $L$ is surjective as a map from $L_k^2(N)$ to $L_{k-1}^2(N)$ for every $k\ge 1$. The index of the operator $L$ equals 
\begin{equation} \label{eqn: index}
\ind L=\langle c_1(N), [C] \rangle-\langle c_1(T^{0,1}X), [C]\rangle.
\end{equation}

It follows from the definition that nondegeneracy only depends on the 1-jet of $J$ on $C$. Namely, if there is another almost complex structure $J'$ such that $(J-J')|_C=0$ and $(\nabla(J-J'))|_C=0$, then $C$ is nondegenerate as a $J$-holomorpic curve if and only if it is nondegenerate as a $J'$-holomorphic curve.

For a homology class $e\in H_2(X;\mathbb{Z})$, define 
$$d(e)=e\cdot e - \langle c_1(T^{0,1}X), e\rangle.$$
By equation \eqref{eqn: index}, $d(e)$ is the formal dimension of the moduli space of embedded pseudo-holomorphic curves in $X$ in the homology class $e$. By the adjunction formula, the genus $g$ of such a curve satisfies 
$$e\cdot e + 2-2g = -\langle c_1(T^{0,1}X), e \rangle. $$
Therefore $d(e)=2(g-\langle c_1(T^{0,1}X),e \rangle -1)$. In general, the formal dimension of the moduli space of $J$-holomorphic \emph{maps} from a genus $g$ curve to $X$ in the homology class $e$, modulo self-isomorphisms of the domain, is equal to $2(g-\langle c_1(T^{0,1}X),e \rangle -1)$.

Now assume $X$ is has a symplectic structure $\omega$. Recall that an almost complex structure $J$ is compatible with $\omega$ if $\omega(\cdot,J\cdot)$ defines a Riemannian metric. Let $\mathcal{J}(X,\omega)$ be the set of smooth almost complex structures compatible with $\omega$. For a closed surface $\Sigma$ and a map $\rho:\Sigma\to X$, define the topological energy of $\rho$ to be $\int_\Sigma \, \rho^*(\omega)$. 

\begin{Definition}\label{def: admissible}
Let $(X,\omega)$ be a symplectic manifold. Let $E>0$ be a constant. An almost complex structure $J\in \mathcal{J}(X,\omega)$ is called $E$-admissible if the following conditions hold:
\begin{enumerate}
\item Every embedded $J$-holomorphic curve $C$ with energy less than or equal to $E$ and with $d([C])=0$ is nondegenerate.
\item  \label{condition 2} For every homology class $e\in H_2(X;\mathbb{Z})$, if $\langle [\omega], e\rangle \le E$, and if $\langle c_1(T^{0,1}X), e\rangle >0$ (namely, the formal dimension of the moduli space of $J$-holomorphic maps from a torus to $X$ in the homology class $e$, modulo self-isomorphisms of the domain, is negative), then there is no somewhere injective $J$-holomorphic map from a torus to $X$ in the homology class $e$.
\end{enumerate}
\end{Definition}

The next lemma is a special case of proposition 7.1 in \cite{taubes1996sw}. Recall that the $C^\infty$ topology on $\mathcal{J}(X,\omega)$ is defined as the Fr\'echet topology, namely it is induced by the distance function
$$
d(j_1,j_2)=\sum_{n=1}^{\infty} 2^{-n}\cdot\frac{\|j_1-j_2\|_{C^n}}{1+\|j_1-j_2\|_{C^n}}.
$$ 
\begin{Lemma}
Let $E>0$ be a constant. If $(X,\omega)$ is a compact symplectic manifold, the set of $E$-admissible almost complex structures form an open and dense subset of $\mathcal{J}(X,\omega)$ in the $C^\infty$-topology. \qed
\end{Lemma}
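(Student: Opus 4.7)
The plan is to prove openness and density of the $E$-admissible subset separately, exploiting $\pi_2(X)=0$ to rule out sphere bubbling in every Gromov limit we encounter.

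For density, I would run a Sard--Smale argument on universal moduli spaces in the standard way. Replace $\mathcal{J}(X,\omega)$ by Floer's Banach manifold $\mathcal{J}^\varepsilon$ of compatible almost complex structures of some $C^\varepsilon$ regularity. For each homology class $e\in H_2(X;\mathbb{Z})$ with $\langle[\omega],e\rangle\le E$, and for each of the two kinds of curves appearing in the definition of admissibility, form the universal moduli space $\mathcal{M}^{\mathrm{univ}}_e$ of pairs $(u,J)$ with $u$ somewhere injective (embedded curves are automatically somewhere injective). Using somewhere injectivity and perturbations of $J$ supported near injective points, the linearization of the defining equation is surjective, so $\mathcal{M}^{\mathrm{univ}}_e$ is a smooth Banach manifold and the projection $\pi_e\colon\mathcal{M}^{\mathrm{univ}}_e\to\mathcal{J}^\varepsilon$ is Fredholm of index equal to the formal dimension of the $J$-moduli, which is $0$ in case (1) and $-2\langle c_1(T^{0,1}X),e\rangle<0$ in case (2). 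In case (1) the regular values of $\pi_e$ coincide with those $J$'s for which every such curve is nondegenerate; in case (2) the regular values have empty fiber. Sard--Smale gives a residual set of regular values for each $\pi_e$; since only countably many classes $e$ are relevant and a countable intersection of residual sets is residual, one obtains a dense set of $E$-admissible $J\in\mathcal{J}^\varepsilon$, and Taubes's standard transfer argument then produces $C^\infty$ elements of this set arbitrarily close to any given $J\in\mathcal{J}(X,\omega)$.

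For openness I would show that the complement of the $E$-admissible set is closed. Suppose $J_n\to J$ in $C^\infty$ with every $J_n$ failing to be $E$-admissible. After passing to a subsequence, assume all $J_n$ violate the same condition and, using the energy bound together with Gromov compactness, that the homology class of the offending curve is a fixed $e$. Since $\pi_2(X)=0$, Gromov compactness produces a limit $J$-holomorphic cycle in class $e$ of total $\omega$-energy $\le E$. In case (2), at least one component of the limit is somewhere injective in a class $e'$ with $\langle[\omega],e'\rangle\le E$ and $\langle c_1(T^{0,1}X),e'\rangle>0$, contradicting the $E$-admissibility of $J$. In case (1), upper semicontinuity of the cokernel of the deformation operator $L$ under $C^\infty$-convergence, combined with the adjunction formula, forces the limit to contain an embedded $d=0$ component carrying a cokernel, again contradicting $E$-admissibility of $J$.

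The step I expect to be most delicate is the openness argument in case (1), where one must track the cokernel of the real-linear operator $L$ through the Gromov degeneration and verify that it cannot dissipate into multiply-covered or higher-genus bubble components of the limit. The key input is that the topological data (embedded, $d=0$, $\omega$-energy $\le E$) is rigid under the degenerations available once sphere bubbling is excluded: only a short list of limit configurations can arise, and each is handled by a direct adjunction-plus-positivity argument as in \cite{taubes1996counting}.
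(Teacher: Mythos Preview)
The paper does not actually prove this lemma: it is stated with a terminal \qed and introduced as ``a special case of proposition 7.1 in \cite{taubes1996sw}.'' So there is nothing in the paper to compare your argument to beyond the citation.

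Your outline is the standard route to such a statement and is essentially correct. The density half via Sard--Smale on universal moduli of somewhere injective curves is routine and works exactly as you say. For openness in case~(2), your argument is right in spirit; one small point to make explicit is that the Gromov limit of your somewhere injective $J_n$-tori may itself be a \emph{multiple cover}, so you should pass to the underlying simple curve. With $\pi_2(X)=0$ the only possible domain for that simple curve is again a torus (Riemann--Hurwitz forbids a torus covering anything of higher genus, and spheres are excluded), it lies in a class $e'=e/k$ with $\langle c_1(T^{0,1}X),e'\rangle=\langle c_1(T^{0,1}X),e\rangle/k>0$ and $\langle[\omega],e'\rangle\le E$, and you get your contradiction.

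Your caution about case~(1) is well placed: this is exactly where the content of Taubes's Proposition~7.1 lies. The issue is not just tracking the cokernel but ruling out that the sequence of embedded $d=0$ curves limits to a multiply covered or reducible configuration for which ``nondegeneracy of the limit'' is not even the same notion. The adjunction inequality and positivity of intersections control the irreducible, somewhere injective case (forcing $\kappa=0$, hence embeddedness and the same genus), but the multiply covered case needs the more detailed analysis in \cite{taubes1996sw}. So your sketch is a faithful summary of the argument, with the honest admission that the delicate step is precisely the one the paper outsources.
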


A homology class $e$ is called primitive if $e\neq n\cdot e'$ for every integer $n>1$ and every $e'\in H_2(X;\mathbb{Z})$.  
If $e\in H_2(X;\mathbb{Z})$ is a primitive class, define $\mathcal{M}(X,J,e)$ to be the set of embedded $J$-holomorphic tori in $X$ with fundamental class $e$.

Now consider smooth families of almost complex structures. Assume $\omega_s$ $(s\in[0,1])$ is a smooth family of symplectic forms on $X$. For $i=0,1$, let $J_i\in\mathcal{J}(X,\omega_i)$. Define
$$
\mathcal{J}(X,\{\omega_s\},J_0,J_1)
$$
to be the set of smooth families $\{J_s\}$ connecting $J_0$ and $J_1$, such that $J_s\in\mathcal{J}(X,\omega_s)$ for each $s\in[0,1]$. The ideas of the following lemma can be found implicitly in \cite{taubes1996counting}.

\begin{Lemma}\label{lem: regularity of moduli space for the compact case}
Let $X$ be a compact 4-manifold and let $\omega_s$ $(s\in[0,1])$ be a smooth family of symplectic forms on $X$. Let $e\in H_2(X;\mathbb{Z})$ be a primitive class with $\langle c_1(T^{0,1}X), e \rangle =0$ and $e\cdot e=0$, and let $E>0$ be a constant such that $E>\langle [\omega_s], e \rangle$ for every $s$. For $i\in\{0,1\}$, let $J_i\in \mathcal{J}(X,\omega_i)$ be an $E$-admissible almost complex structure on $X$. Then there is an open and dense subset $\mathcal{U}\subset\mathcal{J}(X,\{\omega_s\},J_0,J_1)$ in the $C^{\infty}$-topology, such that for every element $\{J_s\}\in\mathcal{U}$, the moduli space $\mathcal{M}(X,\{J_s\},e)=\coprod_{s\in [0,1]}\mathcal{M}(X,J_s,e)$ has the structure of a compact smooth 1-manifold with boundary $\mathcal{M}(X,J_0,e) \cup \mathcal{M}(X,J_1,e)$.
\end{Lemma}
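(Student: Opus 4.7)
The strategy is the Sard--Smale plus Gromov compactness method from Taubes's work. Under the hypotheses $\langle c_1(T^{0,1}X),e\rangle=0$ and $e\cdot e=0$, the adjunction formula gives $\langle c_1(N),[C]\rangle = e\cdot e = 0$ at any embedded $J$-holomorphic torus $C$ in class $e$, so by \eqref{eqn: index} the deformation operator $L$ has Fredholm index zero, and the parameterized operator (allowing $s\in[0,1]$ to vary) has index one. We therefore expect $\mathcal{M}(X,\{J_s\},e)$ to be a smooth $1$-manifold for generic $\{J_s\}$.

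For transversality, I would form the universal parameterized moduli space $\widetilde{\mathcal{M}}=\{(\{J_s\},s,C):C\in\mathcal{M}(X,J_s,e)\}$ and study its projection to $\mathcal{J}(X,\{\omega_s\},J_0,J_1)$. Because embedded tori are somewhere injective, a standard argument (see e.g.\ \cite{mcduff1997lectures}) using compactly-supported variations of $J$ near an injective point of $C$ shows that the universal linearization is surjective. Sard--Smale then yields a residual set of paths $\mathcal{U}$ for which the corresponding fiber $\mathcal{M}(X,\{J_s\},e)$ is a smooth $1$-manifold; openness of $\mathcal{U}$ in the $C^\infty$-topology will follow from the compactness established below, together with the fact that embedded nondegenerate tori persist under small perturbations.

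The main obstacle is compactness of $\mathcal{M}(X,\{J_s\},e)$ together with the verification that no Gromov limit escapes the embedded moduli space. Given a sequence $(s_n,C_n)$ with $s_n\to s_\infty$, the bound $\langle[\omega_{s_n}],e\rangle<E$ supplies a uniform energy bound, so Gromov compactness produces a subsequential limit $\phi$ that is a stable $J_{s_\infty}$-holomorphic map in class $e$. The hypothesis $\pi_2(X)=0$ eliminates nontrivial $J_{s_\infty}$-holomorphic spheres; any rational domain component would then have to be constant, hence unstable unless it has three special points, so the non-trivial part of $\phi$ consists of a single smooth torus carrying the full class $e$. Primitivity of $e$ prevents $\phi$ from being a multiple cover, so $\phi$ is somewhere injective. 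The adjunction inequality for somewhere injective maps, combined with $e\cdot e=0$, $\langle c_1(T^{0,1}X),e\rangle=0$, and genus one, then forces an immersed limit to have no double points---that is, to be embedded---provided one also rules out non-immersed critical points of $\phi$. The latter is arranged by the standard Taubes codimension count for generic $\{J_s\}\in\mathcal{U}$, as in \cite{taubes1996counting}; controlling this transition between embedded tori and their degenerations along a $1$-parameter family is the technical heart of the argument.

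Finally, at $s=0,1$ admissibility condition (1) together with $d(e)=0$ guarantees that every embedded $J_i$-holomorphic torus in class $e$ is nondegenerate, so $\mathcal{M}(X,J_i,e)$ is a finite set. Combined with interior smoothness and global compactness, this identifies $\mathcal{M}(X,\{J_s\},e)$ with a compact smooth $1$-manifold with boundary $\mathcal{M}(X,J_0,e)\cup\mathcal{M}(X,J_1,e)$.
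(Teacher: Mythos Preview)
Your proposal is correct and follows essentially the same route as the paper: parametrized transversality via Sard--Smale (using somewhere-injectivity of embedded tori), Gromov compactness, elimination of sphere bubbles via $\pi_2(X)=0$, elimination of multiple covers via primitivity of $e$, and the adjunction formula to force the somewhere-injective limit torus to be embedded.

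The one place you diverge is in treating non-immersed limits as a separate case requiring a genericity/codimension argument from \cite{taubes1996counting}. This is unnecessary: the paper simply invokes McDuff's adjunction formula \cite{mcduff1991the} for \emph{somewhere injective} (not merely immersed) pseudo-holomorphic curves in a symplectic $4$-manifold, namely $e\cdot e + 2 - 2g = -\langle c_1(T^{0,1}X),e\rangle + \kappa$ with $\kappa\ge 0$ and equality if and only if the curve is embedded. Under the hypotheses $g=1$, $e\cdot e=0$, $\langle c_1(T^{0,1}X),e\rangle=0$ one gets $\kappa=0$ directly, so the limit is embedded with no further work and no appeal to genericity of the path at this step.
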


\begin{proof}
The formal dimension of the moduli space of $J_s$-holomorphic maps from a genus $g$ curve to $X$ in homology class $e$, modulo self-isomorphisms of the domain, is equal to $2(g-\langle c_1(T^{0,1}X),e \rangle -1)$, which always even. When the formal dimension is negative, it is less than or equal to $-2$. Therefore, there is an open and dense subset $\mathcal{U_1}\subset\mathcal{J}(X,\{\omega_s\},J_0,J_1)$, such that condition \ref{condition 2} of definition \ref{def: admissible} holds for each $J_s$. The standard transversality argument shows that on an open and dense subset $\mathcal{U}\subset \mathcal{U}_0$, the space $\mathcal{M}(X,\{J_s\},e)$ is a smooth 1-manifold with boundary $\mathcal{M}(X,J_0,e) \cup \mathcal{M}(X,J_1,e)$. For general $X$ and $e$ the spcae $\mathcal{M}(X,\{J_s\},e)$ does not have to be compact. However, since it is assumed that $\pi_2(X)=0$, there is no non-constant $J_s$-holomorphic maps from a sphere to $X$. By Gromov's compactness theorem (see for example \cite{ye1994gromov}), for every sequence $\{C_n\}\subset \mathcal{M}(X,\{J_s\},e)$, there is a subsequence $\{C_{n_i}\}$ with $C_{n_i}\in \mathcal{M}(X,J_{s_i},e)$ and $\lim_{i\to\infty}s_i=s_0$, such that the sequence $C_{n_i}$ is convergent to one of the following:  (1) a branched multiple cover of a somewhere injective $J_{s_0}$-holomorphic map, (2) a somewhere injective $J_{s_0}$-holomorphic map with bubbles or nodal singularities or both, (3) a somewhere injective $J_{s_0}$-holomorphic torus. Case (1) is impossible since $e$ is assumed to be a primitive class. Case (2) is impossible becase there is no non-constant $J_{s_0}$-holomorphic maps from a sphere to $X$.  When case (3) happens, for the limit curve the adjunction formula states that $e\cdot e + 2-2g = -\langle c_1(T^{0,1}X), e \rangle + \kappa$, where $\kappa$ depends on the behaviour of singularities and self-intersections of the curve, and $\kappa$ is always positive if the curve is not embedded (see \cite{mcduff1991the}). Since $g=1$, $e\cdot e=0$, $\langle c_1(T^{0,1}X), e \rangle =0$, it follows that $\kappa=0$, hence the limit curve is an embedded curve, namely it is an element of $\mathcal{M}(X,J_{s_0},e)$. Therefore the space $\mathcal{M}(X,\{J_s\},e)$ is compact.
\end{proof}

With a little more effort one can generalize lemma \ref{lem: regularity of moduli space for the compact case} to non-compact symplectic manifolds. To start, one needs the following definition.

\begin{Definition}
Let $(X,\omega)$ be a symplectic manifold, not necessarily compact. Let $J\in\mathcal{J}(X,\omega)$. The pair $(\omega,J)$ defines a Riemannian metric $g$ on $X$. The triple $(X,\omega,J)$ is said to have bounded geometry with bounding constant $N$ if the following conditions hold:
\begin{enumerate}
\item The metric $g$ is complete.
\item The norm of the curvature tensor of $g$ is less than $N$.
\item The injectivity radius of $(X,g)$ is greater than $1/N$.
\end{enumerate}
\end{Definition}

One says that a path $\{(X,\omega_s,J_s)\}$ has uniformly bounded geometry if each $(X,\omega_s,J_s)$ has bounded geometry, and the bounding constant $N$ is independent of $s$.

The following lemma is a well-known result.

\begin{Lemma} \label{lem: bound on diameters of J curve}
Let $(X,\omega,J)$ be a triple with bounded geometry, with bounding constant $N$. Let $e\in H_2(X;\mathbb{Z})$, and let $E>0$ be a constant such that $E\ge \langle [\omega], e\rangle$. Then there is a constant $M(N,E)$, depending only on $N$ and $E$, such that every connected $J$-holomorphic curve $C$ with fundamental class $e$ has diameter less than $M(N,E)$ with respect to the metric defined by $\omega(\cdot,J\cdot)$.
\end{Lemma}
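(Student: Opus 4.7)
The plan is to combine the monotonicity formula for $J$-holomorphic curves with a covering argument based on connectedness. Since $(X,\omega,J)$ has bounded geometry with bounding constant $N$, the sectional curvature of the Riemannian metric $g = \omega(\cdot,J\cdot)$ is uniformly bounded and the injectivity radius is uniformly bounded below; these are precisely the ingredients under which the standard monotonicity inequality for pseudo-holomorphic curves can be made uniform on the whole of $X$.

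First I would establish (or cite) the local area bound: there exist constants $r_0 = r_0(N) > 0$ and $c_0 = c_0(N) > 0$ such that for every $J$-holomorphic curve $C$, every point $p \in C$, and every $0 < r \le r_0$, one has $\operatorname{Area}(C \cap B_r(p)) \ge c_0 r^2$, where $B_r(p)$ denotes the geodesic ball of radius $r$ in $(X,g)$. This is the classical monotonicity lemma for $J$-holomorphic curves; its proof proceeds via the fact that such curves are minimal with respect to $g$ together with a comparison-geometry argument whose constants depend only on the curvature bound and the lower injectivity-radius bound, hence only on $N$.

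Next, because $J$ is $\omega$-compatible and $C$ is $J$-holomorphic, the area of $C$ with respect to $g$ equals $\int_C \omega = \langle [\omega], e\rangle \le E$. I would then choose a maximal collection of points $p_1,\dots,p_k \in C$ such that the geodesic balls $B_{r_0/2}(p_i)$ are pairwise disjoint. Summing the monotonicity bound over this disjoint family yields $k \cdot c_0 (r_0/2)^2 \le \operatorname{Area}(C) \le E$, so $k \le K(N,E)$ for an explicit constant depending only on $N$ and $E$. By maximality the balls $B_{r_0}(p_i)$ cover $C$, and since $C$ is connected and each of these balls has diameter at most $2 r_0$, a chain argument bounds the diameter of $C$ by $2 k r_0$, giving the desired constant $M(N,E)$.

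The main technical step is the monotonicity inequality with constants depending solely on $N$, where the three hypotheses of bounded geometry (completeness, curvature bound, and lower bound on the injectivity radius) are used in an essential way to obtain uniform geodesic normal coordinates and hence a uniform minimal-surface comparison on balls of radius $r_0(N)$. Once this local area lower bound is in hand, the covering and connectedness argument is routine.
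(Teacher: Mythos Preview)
Your proposal is correct and follows essentially the same approach as the paper: both invoke the monotonicity inequality (with constants depending only on $N$) to get a uniform lower bound on the area of $C$ in small balls, observe that the total area of $C$ equals $\langle[\omega],e\rangle\le E$, and then use connectedness to convert the area bound into a diameter bound. The paper's proof is simply a terser version of yours, compressing your maximal-disjoint-balls covering argument into the single sentence ``Since $C$ is connected, this implies that the total area of $C$ bounds its diameter.''
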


\begin{proof}
By the monotonicity of area, there is a constant $\delta$ depending only on $N$, such that for every point $p\in C$ the area of $B_p(1/N)\cap C$ is greater than $\delta$. Since $C$ is connected, this implies that the total area of $C$ bounds its diameter. Notice that the area of $C$ equals $\langle [\omega], e\rangle $, which is bounded by $E$, hence the the diameter is bounded by a function of $N$ and $E$.
\end{proof}

In the noncompact case, one needs to be more careful about the topology of the spaces of almost complex structures. A topology on $\mathcal{J}(X,\omega)$ can be defined as follows.
Cover $X$ by countably many compact sets $\{A_i\}_{i\in\mathbb{Z}}$. For each $A_i$ define the $C^\infty$-topology on $\mathcal{J}(A_i,\omega)$. Endow the product space
$$
\prod_{i\in\mathbb{Z}}\mathcal{J}(A_i,\omega)
$$
with the box topology, and consider the map
$$
\mathcal{J}(X,\omega) \xhookrightarrow{} \prod_{i\in\mathbb{Z}}\mathcal{J}(A_i,\omega)
$$
defined by restrictions. The topology on $\mathcal{J}(X,\omega)$ is then defined as the pull back of the box topology on the product space.

For $N>0$, define $\mathcal{J}(X,\omega,N)$ to be the set of almost complex structures $J\in\mathcal{J}(X,\omega)$ such that $(X, \omega, J)$ has bounded geometry with bounding constant $N$. With the topology given above, the space $\mathcal{J}(X,\omega,N)$ is an open subset of $\mathcal{J}(X,\omega)$.

A topology on $\mathcal{J}(X,\{\omega_s\},J_0,J_1)$ can be defined in a similar way. Cover $X$ by countably many compact sets $\{A_i\}_{i\in\mathbb{Z}}$. For each $A_i$ define the $C^\infty$-topology on $\mathcal{J}(A_i,\{\omega_s\},J_0,J_1)$. The topology on the space $\mathcal{J}(X,\{\omega_s\},J_0,J_1)$ is then defined as the pull back of the box topology on the product space.

For $N>0$, define the set $\mathcal{J}(X,\{\omega_s\},J_0,J_1,N)$ to be the set of families $\{J_s\}\in \mathcal{J}(X,\{\omega_s\},J_0,J_1)$ such that $\{(X,J_s,\omega_s)\}$ has uniformly bounded geometry with bounding constant $N$. Then the set $\mathcal{J}(X,\{\omega_s\},J_0,J_1,N)$ is an open subset of $\mathcal{J}(X,\{\omega_s\},J_0,J_1)$.

The following lemma is essentially a diagonal argument. It explains why the topologies defined above are the correct topologies to accommodate the perturbation arguments for the rest of the article.

\begin{Lemma} \label{lem: diagonal argument}
Let $\{A_n\}_{n\ge 1}$ be a countable, locally finite cover of $X$ by compact subsets. Let $\omega$ be a symplectic form on $X$, let $\omega_s$ be a smooth family of symplectic forms on $X$. Let $N>0$ be a constant. Let $J_i\in \mathcal{J}(X,\omega_i,N)$, where $i=0\mbox{ or }1$.
\begin{enumerate}
\item Let $\varphi:\mathcal{J}(X,\omega) \xhookrightarrow{} \prod_n\mathcal{J}(A_n,\omega)$ be the embedding map. For every $n$, let $\mathcal{U}_n$ be an open and dense subset of $\mathcal{J}(A_n,\omega)$, then $\varphi^{-1}(\prod_n \mathcal{U}_n)$
is an open and dense subset of $\mathcal{J}(X,\omega)$.
\item Let $\varphi:\mathcal{J}(X,\{\omega_s\},J_0,J_1) \xhookrightarrow{} \prod_n\mathcal{J}(A_i,\{\omega_s\},J_0,J_1)$ be the embedding map. For every $n$, let $$\mathcal{U}_n\subset \mathcal{J}(A_n,\{\omega_s\},J_0,J_1) $$ be an open and dense subset, then $\varphi^{-1}(\prod_n \mathcal{U}_n)$
is an open and dense subset of $\mathcal{J}(X,\{\omega_s\},J_0,J_1) $.
\end{enumerate}
\end{Lemma}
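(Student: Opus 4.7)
The openness statement in both parts is immediate from the definition of the pullback topology: since each $\mathcal{U}_n$ is open in the corresponding factor, the product $\prod_n \mathcal{U}_n$ is a basic open set in the box topology, and so its $\varphi$-preimage is open. The content of the lemma is density, which I plan to establish by a diagonal perturbation argument whose convergence is controlled by local finiteness of the cover $\{A_n\}$.

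For part (1), fix $J\in\mathcal{J}(X,\omega)$ together with a basic open neighborhood $\varphi^{-1}(\prod_n V_n)$ of $J$, where each $V_n$ can be taken to be a $C^\infty$-ball of some radius $\epsilon_n>0$ around $J|_{A_n}$ in $\mathcal{J}(A_n,\omega)$. I will build a sequence $J_0:=J$, $J_1,J_2,\ldots$ in $\mathcal{J}(X,\omega)$ satisfying $J_k|_{A_m}\in V_m\cap\mathcal{U}_m$ for $m\le k$ and $J_k|_{A_m}\in V_m$ for every $m$. At the $k$-th step, choose a precompact open thickening $A_k'\supset A_k$; by local finiteness, $\overline{A_k'}$ meets only finitely many of the $A_m$. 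Density of $\mathcal{U}_k$ combined with openness of $V_k$ produces an almost complex structure on $A_k'$ that is $C^\infty$-close to $J_{k-1}|_{A_k'}$ and restricts to an element of $V_k\cap\mathcal{U}_k$ on $A_k$. Interpolate this with $J_{k-1}$ using a bump function supported in $A_k'$, and correct the result back into $\mathcal{J}(X,\omega)$ via the local smooth Fr\'echet-manifold structure at $J_{k-1}$. The size of this perturbation can be chosen small enough in $C^\infty(A_k')$ that, for each of the finitely many $m<k$ with $A_m\cap A_k'\ne\emptyset$, the openness of $V_m\cap\mathcal{U}_m$ preserves the inductive conditions.

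Local finiteness also gives convergence: for each fixed $m$, only finitely many later steps ever modify $J_k|_{A_m}$, so if the $k$-th perturbation is chosen to decay geometrically in $k$, the limit $J':=\lim_k J_k$ exists as a smooth almost complex structure on $X$ with $J'|_{A_n}\in V_n\cap\mathcal{U}_n$ for every $n$. This $J'$ lies in the desired basic neighborhood and in $\varphi^{-1}(\prod_n\mathcal{U}_n)$, which proves density. Part (2) is established by the identical induction applied to the space of paths: the endpoint values $J_0,J_1$ are automatically preserved because every perturbation is taken inside $\mathcal{J}(X,\{\omega_s\},J_0,J_1)$, and the bump-function interpolation is performed pointwise in $s$. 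The one nontrivial point is the bookkeeping in the induction---tracking the finitely many open conditions that each step must preserve---and this is exactly what local finiteness of the cover was designed to handle.
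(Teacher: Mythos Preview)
Your argument is correct and follows essentially the same route as the paper's: openness is immediate from the box topology, and density comes from an inductive perturbation supported in open thickenings of the $A_n$, with local finiteness making the sequence eventually stabilize on each compact piece so that the limit $J'$ exists. The only point to sharpen is that you should choose the thickenings $A_k'$ so that the family $\{A_k'\}$ is itself locally finite (as the paper does with its $D_n$); this is what justifies your claim that for fixed $m$ only finitely many later steps ever touch $A_m$, and once it is in place your geometric-decay requirement becomes unnecessary since the sequence literally stabilizes on each $A_m$.
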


\begin{proof}
For part 1, the set $\varphi^{-1}(\prod_n \mathcal{U}_n)$ is open by the definition of box topology. To prove that $\varphi^{-1}(\prod_n \mathcal{U}_n)$ is dense, let $J$ be an element of $\mathcal{J}(X,\omega)$. Let $J_n=J|_{A_n}\in\mathcal{J}(A_n,\omega)$. Let $\mathcal{V}_n\subset \mathcal{J}(A_n,\omega)$ be an arbitrary open neighborhood of $J_n$. One needs to find an element $J'\in \mathcal{J}(X,\omega)$ such that $J'|_{A_n}\in \mathcal{V}_n\cap \mathcal{U}_n$. For each $n$, let $D_n$ be an open neighborhood of $A_n$ such that the family $\{D_n\}$ is still a locally finite cover of $X$. One obtains the desired $J'$ by perturbing $J$ on the open sets $\{D_n\}$ one by one. To start, perturb the section $J$ on $D_1$ to obtain a section $J_1$. Since $\mathcal{U}_1$ is dense it is possible to find a perturbation such that $J_1|_{A_1}\in \mathcal{U}_1\cap \mathcal{V}_1$. Now assume that after perturbation on $D_1,D_2,\cdots, D_k$, one obtains a section $J_k$ such that $J_k|_{A_i}\in \mathcal{U}_j\cap \mathcal{V}_j$ for $j=1,2,\cdots,k$. Then a perturbation of $J_k$ on $D_{k+1}$ gives a section $J_{k+1}$ such that $J_{k+1}|_{A_{k+1}}\in \mathcal{U}_{k+1}\cap \mathcal{V}_{k+1}$. When the perturbation is small enough, it still has the property that $J_{k+1}|_{A_j}\in \mathcal{U}_j\cap \mathcal{V}_j$ for $j=1,2,\cdots,k$. Since $\{D_n\}$ is a locally finite cover of $X$, on each compact subset of $X$ the sequence $\{J_k\}$ stabilizes for sufficiently large $k$. The limit $\lim_{k\to \infty} J_k$ then gives the desired $J'$.

The proofs for part 2 is exactly the same, one only needs to change the notation $\mathcal{J}(\cdot,\omega)$ to $\mathcal{J}(\cdot,\{\omega_s\},J_0,J_1)$.
\end{proof}

\begin{Lemma} \label{lem: regularity of moduli space for the noncompact case}
Let $X$ be a 4-manifold, let $e\in H_2(X;\mathbb{Z})$ be a primitive class. Assume $\omega_s$ $(s\in[0,1])$ is a smooth family of symplectic forms on $X$. Let $E$ be a positive constant such that $E>\langle[\omega_s],e\rangle$ for every $s$. For $i=0,1$, assume $J_i\in \mathcal{J}(X,\omega_i,N)$ is $E$-admissible. If the set $\mathcal{J}(X,\{\omega_s\},J_0,J_1,N)$ is not empty, then there is an open and dense subset $\mathcal{U}\subset\mathcal{J}(X,\{\omega_s\},J_0,J_1,N)$, such that for each $\{J_s\}\in\mathcal{U}$, the moduli space $\mathcal{M}(X,\{J_s\},e)=\coprod_{s\in [0,1]}\mathcal{M}(X,J_s,e)$ has the structure of a smooth 1-manifold with boundary $\mathcal{M}(X,J_0,e) \cup \mathcal{M}(X,J_1,e)$. Moreover, if $f:X\to \mathbb{R}$ is a smooth proper function on $X$, then the function defined as
\begin{align*}
\mathfrak{f}:\mathcal{M}(X,\{J_s\},e) & \to \mathbb{R} \\
                   C   & \mapsto \big(\int_C f \, dA \big)/
                         \big(\int_C 1 \, dA \big)
\end{align*}
is a smooth proper function on $\mathcal{M}(X,\{J_s\},e)$, where $dA$ is the area form of $C$.
\end{Lemma}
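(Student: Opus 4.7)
The plan is to reduce to Lemma \ref{lem: regularity of moduli space for the compact case} by combining the diameter estimate of Lemma \ref{lem: bound on diameters of J curve} with the diagonal argument of Lemma \ref{lem: diagonal argument}. First I would apply Lemma \ref{lem: bound on diameters of J curve} to extract a single constant $M=M(N,E)$ bounding the diameter of every $J_s$-holomorphic curve of class $e$, uniformly in $s\in[0,1]$; the uniformity is automatic because $\{(X,\omega_s,J_s)\}$ has uniformly bounded geometry with constant $N$. I would then choose a countable locally finite cover $\{B_n\}$ of $X$ by compact sets and let $A_n$ be a compact set containing the closed $(M+1)$-neighborhood of $B_n$ in every $g_s$; the existence of $A_n$ follows from completeness and Hopf--Rinow together with the uniform bounded geometry. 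Every $J_s$-curve of class $e$ meeting $B_n$ then lies entirely inside $A_n$.

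With this setup, the transversality argument underlying Lemma \ref{lem: regularity of moduli space for the compact case} can be localized: for each $n$, perturbations of $\{J_s\}$ supported in a neighborhood of $A_n$ suffice, by unique continuation for the linearized Cauchy--Riemann operator, to cut every $J_s$-curve passing through $B_n$ out transversely and to preserve condition \ref{condition 2} of Definition \ref{def: admissible} along the path. This produces an open and dense subset $\mathcal{U}_n\subset \mathcal{J}(A_n,\{\omega_s\},J_0|_{A_n},J_1|_{A_n})$. Feeding $\{\mathcal{U}_n\}$ into Lemma \ref{lem: diagonal argument}(2) and intersecting the result with the open set $\mathcal{J}(X,\{\omega_s\},J_0,J_1,N)$ yields the required $\mathcal{U}$. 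For $\{J_s\}\in\mathcal{U}$, every curve in $\mathcal{M}(X,\{J_s\},e)$ lies in some $A_n$ and is cut out transversely, so the implicit function theorem delivers a local smooth $1$-manifold structure. The Gromov compactness and adjunction analysis already carried out in the proof of Lemma \ref{lem: regularity of moduli space for the compact case} --- using $\pi_2(X)=0$, primitivity of $e$, and the vanishing of $e\cdot e$ and $\langle c_1(T^{0,1}X),e\rangle$ --- applies verbatim on each compact $A_n$ to rule out bubbling, multiple covers, and non-embedded limits. The boundary at $s=0,1$ is the discrete set of $J_i$-curves of class $e$, which are regular by $E$-admissibility.

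Smoothness of $\mathfrak{f}$ is immediate from differentiation under the integral sign, using that the denominator $\int_C 1\,dA=\langle[\omega_s],e\rangle$ is bounded below by a positive constant on the compact parameter interval. For properness, suppose $\mathfrak{f}(C)\in[a,b]$. Because the area-weighted average of $f$ over $C$ lies in $[a,b]$, one has $\min_C f\le b$ and $\max_C f\ge a$, and the intermediate value theorem applied along the connected curve $C$ forces $C$ to meet the compact set $f^{-1}([a,b])$. The diameter bound $M$ then places $C$ inside the fixed compact set $K=\{x\in X:d(x,f^{-1}([a,b]))\le M\}$, and a final application of Gromov compactness on $K$ proves that $\mathfrak{f}^{-1}([a,b])$ is compact.

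The main technical obstacle I anticipate is the localization of the transversality argument from Lemma \ref{lem: regularity of moduli space for the compact case}: one must verify that perturbations of $\{J_s\}$ confined to each $A_n$ genuinely suffice to achieve surjectivity of the linearized operator for every curve passing through $B_n$, and that these local perturbations combine via Lemma \ref{lem: diagonal argument} without destroying transversality already achieved on neighboring $A_m$'s. The uniform bounded geometry hypothesis is precisely what makes both the diameter bound $M$ and the geometry of the cover $\{A_n\}$ uniform in $s$.
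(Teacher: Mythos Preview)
Your proposal is correct and follows essentially the same route as the paper: both arguments combine the uniform diameter bound of Lemma \ref{lem: bound on diameters of J curve} with a locally finite cover by enlarged compact sets, localize the transversality argument of Lemma \ref{lem: regularity of moduli space for the compact case} to each piece, and assemble the result via the diagonal argument of Lemma \ref{lem: diagonal argument}; the properness of $\mathfrak{f}$ is handled identically by finding a point of $C$ where $f$ lies in the given range and then invoking the diameter bound and Gromov compactness. The only cosmetic difference is that the paper fixes a single complete metric $g$ with $g_s\ge g$ to build the cover once and for all, whereas you require each $A_n$ to contain the $(M+1)$-neighborhood of $B_n$ for every $g_s$; the paper's choice is slightly cleaner but both work under the uniform bounded geometry hypothesis.
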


\begin{proof}
One first prove that there is an open and dense subset $\mathcal{U}\subset\mathcal{J}(X,\{\omega_s\},J_0,J_1,N)$, such that for every $\{J_s\}\in\mathcal{U}$, the moduli space $\mathcal{M}(X,\{J_s\},e)$ is a smooth 1-dimensional manifold. Let $g_s$ be the metric on $X$ compatible with $J_s$ and $\omega_s$. Let $g$ be a complete metric on $X$ such that $g_s\ge g$ for every $s$. From now on, the distance function on $X$ is defined by the metric $g$. By lemma \ref{lem: bound on diameters of J curve}, there exists a constant $M>0$ such that the diameter of every $J_s$-holomorphic curve with energy no greater than $E$ is bounded by $M$. Let $\{B_n\}$ be a countable locally finite cover of $X$ by open balls of radius $1$. For every $n$, let $A_n$ be the closed ball with the same center as $B_n$ and with radius $(M+1)$. The family $\{A_n\}$ is also a locally finite cover of $X$. For each $n$, let $\mathcal{M}_n(X,\{J_s\},e)$ be the open subset of $\mathcal{M}(X,\{J_s\},e)$ consisting of the curves $C\in \mathcal{M}(X,\{J_s\},e)$ such that $C\cap A_n\neq \emptyset$. By the diameter bound of $J_s$-holomorphic curves and the results for the compact case, there is an open and dense subset $\mathcal{U}_n\subset \mathcal{J}(A_n,\{\omega_s\},J_0,J_1,N)$ such that if $\{J_s\}|_{A_n}\in \mathcal{U}_n$, then the set $\mathcal{M}_n(X,\{J_s\},e)$ is a smooth 1 dimensional manifold. It then follows from part 2 of lemma \ref{lem: diagonal argument} that there is an open and dense subset $\mathcal{U}\subset\mathcal{J}(X,\{\omega_s\},J_0,J_1,N)$ such that for every element $\{J_s\}\in\mathcal{U}$ the set $\mathcal{M}(X,\{J_s\},e)$ is a smooth 1-manifold.

When set $\mathcal{M}(X,\{J_s\},e)$ is a smooth 1-manifold, its boundary is $\mathcal{M}(X,J_0,e) \cup \mathcal{M}(X,J_1,e)$, and the function $\mathfrak{f}$ is a smooth function on $\mathcal{M}(X,\{J_s\},e)$. 

It remains to prove that $\mathfrak{f}$ is a proper function. For any constant $z>0$, take a sequence of curves $C_n\in \mathcal{M}(X,\{J_s\},e)$ such that $|\mathfrak{f}(C_n)|<z$. By the definition of $\mathfrak{f}$, there exists a sequence of points $p_n\in C_n$ such that $|f(p_n)|<z$. Since $f$ is a proper function on $X$, the sequence $p_n$ is bounded on $X$. By lemma \ref{lem: bound on diameters of J curve} this implies that the curves $C_n$ stay in a bounded subset of $X$. By the argument for the compact case (lemma \ref{lem: regularity of moduli space for the compact case}), the sequence $\{C_n\}$ has a subsequence that converges to another point in $\mathcal{M}(X,\{J_s\},e)$, hence function $\mathfrak{f}$ is proper.
\end{proof}

\section{Symplectization of taut foliations} \label{sec: symplectization}
This section discusses a symplectization of \emph{oriented and cooriented} taut foliations. It is the main ingredient for the proof of theorem \ref{thm: main}.

Let $M$ be a smooth 3-manifold, let $\mathcal{F}$ be a smooth oriented and cooriented taut foliation on $M$. Since $\mathcal{F}$ is cooriented, it can be written as $\mathcal{F}=\ker\lambda$ where the positive normal direction of $\mathcal{F}$ is positive on $\lambda$. 
Since $\mathcal{F}$ is taut, there exists a closed 2-form $\omega$ such that $\omega\wedge\lambda>0$ everywhere on $M$. Choose a metric $g_0$ on $M$ such that $*_{g_0}\lambda=\omega$. 
By Frobenious theorem, $d\lambda=\mu\wedge\lambda$ for a unique 1-form $\mu$ satisfying $\mu\perp\lambda$. 
Locally, write $\omega=e^1\wedge e^2$ where $e^1$ and $e^2$ are orthonormal with respect to the metric $g_0$. 
Consider the 2-form $\Omega=\omega+d(t\lambda)$ on $M\times \mathbb{R}$ and the metric $g$ defined by
$$
g=\frac{1}{1+t^2}\cdot(dt+t\mu)^2+(1+t^2)\lambda^2+(e^1)^2+(e^2)^2
$$

The 2-form $\Omega$ is a symplectic form on $M\times \mathbb{R}$, and the metric $g$ is independent of the choice of $\{e^1,e^2\}$ and is compatible with $\Omega$. Let $J$ be the almost complex structure given by $(\Omega,g)$. To simplify notations, let $X$ be the manifold $M\times \mathbb{R}$.
\begin{Lemma}[\cite{zhang2016monopole}, lemma 2.1] \label{lem:X has bounded geometry}
The triple $(X,\Omega,J)$ has bounded geometry. \qed
\end{Lemma}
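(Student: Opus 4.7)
The plan is to verify the three defining conditions of bounded geometry directly, using the compactness of $M$ and the fact that the $t$-dependence of $g$ enters only through smooth bounded functions of $t$. The computation will be carried out in the global orthonormal coframe
\[
\theta^0=\frac{dt+t\mu}{\sqrt{1+t^2}},\qquad\theta^1=\sqrt{1+t^2}\,\lambda,\qquad\theta^2=e^1,\qquad\theta^3=e^2,
\]
in which $g=\sum_i(\theta^i)^2$; the forms $e^1,e^2$ are only locally defined on $M$, but the resulting connection data transforms under $SO(2)$ changes of frame by matrices that are uniformly $C^\infty$-bounded because $M$ is closed.

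Completeness is immediate: since $M$ is closed, any divergent sequence in $X$ must escape in the $t$-coordinate, and any path with $|t|\to\infty$ has $g$-length at least $\int|dt|/\sqrt{1+t^2}=+\infty$. For the curvature bound I would derive the Levi-Civita connection forms $\omega^i{}_j$ from Cartan's first structure equation $d\theta^i=-\omega^i{}_j\wedge\theta^j$, and then read off the curvature 2-forms $d\omega^i{}_j+\omega^i{}_k\wedge\omega^k{}_j$. The key point is that every scalar coefficient produced by differentiating the $t$-dependent parts of $\theta^0$ and $\theta^1$ is of the form $t(1+t^2)^{-1}$, $(1+t^2)^{-1}$, or $(1+t^2)^{-3/2}$, all of which, together with all their higher $t$-derivatives, are uniformly bounded on $\mathbb{R}$; meanwhile the $M$-side data $\lambda,\mu,d\lambda,d\mu,e^i$ is $C^\infty$-bounded by compactness of $M$. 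This yields a uniform $C^0$-bound on $\omega^i{}_j$ and hence on the full curvature tensor.

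For the injectivity radius bound I would invoke Cheeger's lemma: on a complete Riemannian manifold with bounded sectional curvature, a uniform positive lower bound on the volumes of unit balls forces a uniform positive lower bound on the injectivity radius. The volume bound is straightforward, because $dV_g=\theta^0\wedge\theta^1\wedge\theta^2\wedge\theta^3=dt\wedge\lambda\wedge e^1\wedge e^2$ (the $t\mu$ cross-term contributes zero since $\mu\wedge\lambda\wedge e^1\wedge e^2$ is a 4-form on the 3-manifold $M$), and a geodesic unit ball about any point $(p,t)$ contains a product region of $t$-extent $\sim\sqrt{1+t^2}$ and $\lambda$-extent $\sim(1+t^2)^{-1/2}$ with extents of order $1$ in $e^1,e^2$, giving a $g$-volume that is uniformly bounded below independently of $(p,t)$. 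The main obstacle in this proof is the book-keeping for the connection forms: $d\theta^0$ and $d\theta^1$ each produce several terms that couple the $t$-direction to $\mu$ and $\lambda$, and one must check that every potentially unbounded contribution is precisely cancelled by the $(1+t^2)^{-1/2}$ damping, leaving everything uniformly bounded.
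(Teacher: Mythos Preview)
The paper does not prove this lemma at all: it is stated with a citation to \cite{zhang2016monopole} and closed with a \qed. So your direct verification is not competing with anything in the present paper; it would stand in for an argument the author chose to import. With that understood, your overall strategy---compute in the orthonormal coframe $(\theta^0,\theta^1,\theta^2,\theta^3)$, observe that all $t$-dependence enters through bounded functions like $t(1+t^2)^{-1/2}$ and $(1+t^2)^{-1}$, and combine with $C^\infty$-bounds on $\lambda,\mu,e^1,e^2$ coming from compactness of $M$---is the natural one and will succeed. The Cartan structure-equation bookkeeping is tedious but genuinely routine: each $d\theta^i$, expressed in the $\theta$-basis, has uniformly bounded coefficients, which forces the same for the connection forms and then the curvature. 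Your volume-form computation $dV_g=dt\wedge\lambda\wedge e^1\wedge e^2$ is correct, and Cheeger's lemma is an appropriate way to extract the injectivity-radius bound.

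One small inaccuracy to fix: the completeness step as written claims the $g$-length of a path dominates $\int|dt|/\sqrt{1+t^2}$. That is not literally true, because $\theta^0=(dt+t\mu)/\sqrt{1+t^2}$ contains the cross term $t\mu$, and a path moving in the $\mu$-direction can partially cancel the $dt$ contribution to $\theta^0$. What is true is that $|\dot\gamma|_g\ge c\,|\dot t|/\sqrt{1+t^2}$ for a constant $c>0$ depending only on $\|\mu\|_{L^\infty(M)}$: use $|\dot t|\le|\dot t+t\mu(\dot\gamma)|+|t|\,|\mu(\dot\gamma)|$ together with $|\mu(\dot\gamma)|\le\|\mu\|_\infty\sqrt{(\theta^2(\dot\gamma))^2+(\theta^3(\dot\gamma))^2}\le\|\mu\|_\infty|\dot\gamma|_g$. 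With that constant inserted, your divergence argument goes through, since $\int dt/\sqrt{1+t^2}=\sinh^{-1}t$ is unbounded. The same cross term is what makes the ``product region inside a unit ball'' estimate in your injectivity-radius step slightly delicate; it is still correct, but you should say explicitly that the coordinate box $\{|t-t_0|\lesssim\sqrt{1+t_0^2},\ |\lambda|\lesssim(1+t_0^2)^{-1/2},\ |e^1|,|e^2|\lesssim 1\}$ has $g$-diameter bounded independently of $t_0$, which again uses boundedness of $\mu$.
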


Locally, let $\{e_0,e_1,e_2\}$ be the basis of $TM$ dual to $\{\lambda,e^1,e^2\}$, and extend them to $\mathbb{R}$-translation invariant vector fields on $M\times \mathbb{R}$. Let $\hat{e}_1=e_1-t\mu(e_1)\frac{\partial}{\partial t}$, $\hat{e}_2=e_2-t\mu(e_2)\frac{\partial}{\partial t}$. The almost complex structure $J$ is then given by 
\begin{align*}
J\frac{\partial}{\partial t} &=\frac{1}{1+t^2}e_0, \\
J \hat{e}_1 &= \hat{e}_2.
\end{align*}

Define $\widetilde{\mathcal{F}}= \textrm{span}\{\hat{e}_1,\hat{e}_2\}$, it is a $J$-invariant plane field on $X$.
\begin{Lemma}
The plane field $\widetilde{\mathcal{F}}$ is a foliation on $X$. Under the projection $M\times \mathbb{R} \to M$, the leaves of $\widetilde{\mathcal{F}}$ projects to the leaves of $\mathcal{F}$.
\end{Lemma}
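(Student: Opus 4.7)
My plan is to realize $\widetilde{\mathcal{F}}$ globally as the common kernel of two 1-forms on $X$ and then apply the Frobenius theorem. Set $\beta := dt + t\mu$, where $\mu$ is pulled back to $X = M \times \mathbb{R}$ via the projection. Direct evaluation gives $\lambda(\hat e_i) = 0$ and $\beta(\hat e_i) = -t\mu(e_i) + t\mu(e_i) = 0$ for $i=1,2$, while $\lambda$ and $\beta$ are pointwise linearly independent (their $dt$-components differ). Hence $\widetilde{\mathcal{F}} = \ker\lambda \cap \ker\beta$, which shows in particular that $\widetilde{\mathcal{F}}$ is a smooth plane field on all of $X$, independent of the auxiliary frame $\{e^1,e^2\}$.

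It then remains to check that the differential ideal generated by $\lambda$ and $\beta$ is closed. By hypothesis, $d\lambda = \mu \wedge \lambda$, which lies in the ideal. Differentiating $\beta$ gives $d\beta = dt \wedge \mu + t\,d\mu$. Writing $dt = \beta - t\mu$ and using $\mu \wedge \mu = 0$, the first term reduces to $\beta \wedge \mu$. For the second term, I would take the exterior derivative of the Frobenius identity $d\lambda = \mu \wedge \lambda$, which yields $d\mu \wedge \lambda = 0$. Since $M$ is three-dimensional and $\lambda$ is nowhere vanishing, a pointwise linear-algebra argument in a local coframe containing $\lambda$ shows that $d\mu = \lambda \wedge \gamma$ for some 1-form $\gamma$ on $M$; hence $t\,d\mu = \lambda \wedge (t\gamma)$ also lies in the ideal. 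The Frobenius theorem then produces the desired integrability.

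For the projection assertion, the differential of $\pi : M \times \mathbb{R} \to M$ satisfies $\pi_* \hat e_i = e_i$, so $\pi_*$ restricts to a linear isomorphism $\widetilde{\mathcal{F}}_{(p,t)} \to \mathcal{F}_p$ at every point. Consequently, for any leaf $\widetilde L$ of $\widetilde{\mathcal{F}}$, the map $\pi|_{\widetilde L}$ is a local diffeomorphism whose image is everywhere tangent to $\mathcal{F}$; by uniqueness of maximal integral manifolds of $\mathcal{F}$ combined with connectedness of $\widetilde L$, the image of $\widetilde L$ under $\pi$ is contained in a single leaf of $\mathcal{F}$. I do not expect any step to present a serious obstacle; the only point requiring minor care is the derivation $d\mu = \lambda \wedge \gamma$ from $d\mu \wedge \lambda = 0$, which is purely pointwise.
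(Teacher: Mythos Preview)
Your proposal is correct and follows essentially the same route as the paper: both identify $\widetilde{\mathcal{F}}=\ker\lambda\cap\ker(dt+t\mu)$, use $d(d\lambda)=0$ to write $d\mu$ as a multiple of $\lambda$, and then verify the Frobenius ideal condition for $\{\lambda,\,dt+t\mu\}$ together with the pointwise isomorphism $\pi_*\hat e_i=e_i$. Your write-up simply fills in a few more details (the explicit verification that $\hat e_i$ lie in the common kernel, and the local-diffeomorphism argument for the projection) than the paper's terser version.
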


\begin{proof}
Since $d\mu\wedge\lambda =d(d\lambda)=0$, there is a $\mu_1$ such that $d\mu=\mu_1\wedge \lambda$. 
Therefore, one has $d(dt+t\mu)=(dt+t\mu)\wedge \mu + t\mu_1\wedge\lambda$, and $d\lambda=\mu\wedge \lambda$. 
By Frobenius theorem, the plane field $\widetilde{\mathcal{F}}=\ker (dt+t\mu) \cap \ker \lambda$ is a foliation. 
The tangent planes of $\widetilde{\mathcal{F}}$ projects isomorphically to the tangent planes of $\mathcal{F}$ pointwise, thus the leaves of $\widetilde{\mathcal{F}}$ projects to the leaves of $\mathcal{F}$. 
\end{proof}

It turns out that every closed $J$-holomorphic curve in $X$ is a closed leaf of $\widetilde{\mathcal{F}}$.
\begin{Lemma} \label{lem:every J curve is a leaf}
Let $\rho:\Sigma\to X$ be a $J$-holomorphic map from a closed Riemann surface to $X$. Then either $\rho$ is a constant map, or it is a branched cover of a closed leaf of $\widetilde{\mathcal{F}}$.
\end{Lemma}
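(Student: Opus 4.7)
The plan is to exploit the fact that the 2-form $\omega$, regarded on $X$ via pullback from $M$, is tamed by $J$ and agrees with $\Omega$ exactly on planes tangent to $\widetilde{\mathcal{F}}$. Because $\Omega - \omega = d(t\lambda)$ is exact, Stokes's theorem equates $\int_\Sigma \rho^*\omega$ and $\int_\Sigma \rho^*\Omega$ for any closed $J$-holomorphic $\rho$, which forces the pointwise taming inequality to be saturated everywhere. This in turn forces $d\rho$ to land in $\widetilde{\mathcal{F}}$ at every regular point of $\rho$, and a short topological argument then gives the branched cover conclusion.

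The first main step is the pointwise comparison. I would work in the $g$-orthonormal frame $u_1 = \hat{e}_1$, $u_2 = \hat{e}_2$, $u_3 = \sqrt{1+t^2}\,\partial_t$, $u_4 = e_0/\sqrt{1+t^2}$, for which $Ju_1 = u_2$, $Ju_3 = u_4$, and $\widetilde{\mathcal{F}} = \mathrm{span}(u_1, u_2)$ is $g$-orthogonal to $\mathrm{span}(u_3, u_4)$. Since $\omega$ is pulled back from $e^1\wedge e^2$ on $M$, and both $e^1$ and $e^2$ annihilate $\partial_t$ and $e_0$, in this frame one has $\omega = u^1\wedge u^2$. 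For $v = \sum_i a_i u_i$ a short computation then gives
\[
\omega(v, Jv) = a_1^2 + a_2^2 \;\le\; \sum_{i=1}^4 a_i^2 = g(v,v) = \Omega(v, Jv),
\]
with equality precisely when $a_3 = a_4 = 0$, i.e.\ $v \in \widetilde{\mathcal{F}}$.

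For a closed $J$-holomorphic $\rho:\Sigma \to X$, applying this inequality to $d\rho$ at each point gives $\rho^*\omega \le \rho^*\Omega$ pointwise on $\Sigma$, while Stokes's theorem gives $\int_\Sigma \rho^*(\Omega - \omega) = \int_\Sigma d(\rho^*(t\lambda)) = 0$. Hence the two 2-forms agree everywhere on $\Sigma$, so $d\rho$ lands in $\widetilde{\mathcal{F}}$ at every regular point. The critical points of a non-constant $J$-holomorphic map are isolated, so by the local factorization of $\rho$ through plaques of $\widetilde{\mathcal{F}}$ the image of $\rho$ lies in a single leaf $L$ of $\widetilde{\mathcal{F}}$. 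If $\rho$ is non-constant, the open mapping theorem makes $\rho(\Sigma)$ open in $L$, and compactness of $\Sigma$ makes it closed, so $\rho(\Sigma) = L$, the leaf $L$ is compact, and $\rho$ is a branched cover of $L$. The most delicate step in the argument will be this last passage from the infinitesimal tangency condition at regular points to the global statement that the image lies in a single (necessarily closed) leaf, which rests on isolated critical points together with the plaque structure of $\widetilde{\mathcal{F}}$.
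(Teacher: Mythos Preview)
Your argument is correct and is essentially the paper's own proof in a different dress: the paper observes directly that $\rho^*\bigl((dt+t\mu)\wedge\lambda\bigr)\ge 0$ pointwise on a $J$-holomorphic $\Sigma$ and that $(dt+t\mu)\wedge\lambda=d(t\lambda)$ integrates to zero, forcing tangency to $\widetilde{\mathcal{F}}=\ker(dt+t\mu)\cap\ker\lambda$. Since $\Omega-\omega=d(t\lambda)=(dt+t\mu)\wedge\lambda$, your inequality $\rho^*\omega\le\rho^*\Omega$ is literally the same pointwise sign condition, just rewritten; your treatment of the final passage from infinitesimal tangency to a branched cover of a closed leaf is more explicit than the paper's, which simply asserts it.
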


\begin{proof}
Since $\rho$ is $J$-holomorphic, $\rho^*\big((dt+t\mu)\wedge \lambda\big)\ge 0$ pointwise on $\Sigma$. On the other hand,
$$
\int_\Sigma \rho^*\big( (dt+t\mu)\wedge \lambda\big) = \int_\Sigma \rho^*(d(t\lambda)) =0.
$$
Therefore $\rho(\Sigma)$ is tangent to $\ker (dt+t\mu) \cap \ker \lambda$, hence either $\rho$ is a constant map, or it is a branched cover of a closed leaf of $\widetilde{\mathcal{F}}$.
\end{proof}

\begin{Lemma} \label{lem: holonomy of tilde F}
Let $L$ be a leaf of $\mathcal{F}$ and $\gamma$ a closed curve on $L$. Let $\pi:M\times \mathbb{R}\to M$ be the projection map. The foliation $\widetilde{\mathcal{F}}$ is then transverse to $\pi^{-1}(\gamma)$ and gives a horizontal foliation on $\pi^{-1}(\gamma)\cong \gamma\times \mathbb{R}$. The holonomy of this foliation along $\gamma$ is given by multiplication of $l(\gamma)^{-1}$, where $l(\gamma)$ is the linear holonomy of $\mathcal{F}$ along $\gamma$.
\end{Lemma}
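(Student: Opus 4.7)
The plan is to unpack the defining conditions of $\widetilde{\mathcal{F}}$ on the cylinder $\pi^{-1}(\gamma)$, integrate the resulting ODE, and then match the answer against the classical formula that expresses the linear holonomy of $\mathcal{F}$ along $\gamma$ as $\exp\!\bigl(\int_\gamma\mu\bigr)$.

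First I would observe that since $\gamma$ lies on the leaf $L$ of $\mathcal{F}=\ker\lambda$, the pullback of $\lambda$ to $\pi^{-1}(\gamma)=\gamma\times\mathbb{R}$ vanishes, so the condition $\ker(dt+t\mu)\cap\ker\lambda$ cutting out $\widetilde{\mathcal{F}}$ reduces there to $dt+t\mu=0$. This 1-form evaluates to $1$ on $\partial/\partial t$ and so restricts to a nowhere-zero 1-form on the surface $\pi^{-1}(\gamma)$; its kernel is a $1$-dimensional foliation transverse to the $\mathbb{R}$-fibers, i.e., a horizontal foliation. Parameterizing $\gamma$ by $s\in[0,1]$ with $\gamma(0)=\gamma(1)=p$, the horizontal leaves are curves $s\mapsto(\gamma(s),t(s))$ satisfying $\dot t(s)+t(s)\mu(\dot\gamma(s))=0$, so integration gives $t(1)=t(0)\exp\!\bigl(-\int_\gamma\mu\bigr)$.

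The crucial remaining step is to identify $\exp\!\bigl(\int_\gamma\mu|_L\bigr)$ with $l(\gamma)$. I plan to cover $\gamma$ by finitely many foliation charts $V_1,\dots,V_n$ with transverse coordinates $z_i$ such that $\mathcal{F}|_{V_i}=\ker dz_i$; then $\lambda|_{V_i}=g_i\,dz_i$ for positive functions $g_i$, and the identity $d\lambda=\mu\wedge\lambda$ forces $(dg_i-g_i\mu)\wedge dz_i=0$, so the restriction of $\mu$ to each leaf in $V_i$ equals $d(\log g_i)$ on that leaf (as a byproduct, $\mu|_L$ is closed, which also follows directly from $d\mu\wedge\lambda=0$). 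Writing $z_{i+1}=\phi_{i,i+1}(z_i)$ for the transition on $V_i\cap V_{i+1}$ and comparing $g_i\,dz_i=g_{i+1}\,dz_{i+1}$ yields $\phi'_{i,i+1}(0)=g_i(p_i)/g_{i+1}(p_i)$ at the transition point $p_i=\gamma(s_i)$. The linear holonomy is the derivative at $0$ of $\phi_{n,1}\circ\phi_{n-1,n}\circ\cdots\circ\phi_{1,2}$, so $l(\gamma)=\prod_i g_i(p_i)/g_{i+1}(p_i)$; reindexing the denominator shows that $\log l(\gamma)$ telescopes to $\sum_i[\log g_i(p_i)-\log g_i(p_{i-1})]=\int_\gamma\mu|_L$. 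Combining with the previous paragraph, the holonomy of $\widetilde{\mathcal{F}}$ along $\gamma$ is multiplication by $\exp\!\bigl(-\int_\gamma\mu\bigr)=l(\gamma)^{-1}$.

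The first two steps are essentially automatic from the form of the defining equations and a routine ODE solve. The main technical obstacle is the bookkeeping in the telescoping identification of $\log l(\gamma)$ with $\int_\gamma\mu|_L$, where one must track carefully which function is evaluated at which base point; once the local formula $\mu|_{\text{leaf}}=d(\log g_i)$ is in hand, however, this is a straightforward if slightly fiddly computation.
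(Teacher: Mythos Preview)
Your proposal is correct, and the first half (solving the ODE $\dot t + t\mu(\dot\gamma)=0$ on $\pi^{-1}(\gamma)$ to get $t(1)=t(0)\exp(-\!\int_\gamma\mu)$) is exactly what the paper does.

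For the identification $l(\gamma)=\exp(\int_\gamma\mu)$ you take a genuinely different route. The paper works in a single tubular neighborhood $U\times(-\epsilon,\epsilon)$ of $\gamma$ with transverse coordinate $z$, writes $\lambda=f\,dz+\nu(z)$ with $\nu(0)=0$, reads off $\mu|_U=f^{-1}df-f^{-1}\tfrac{\partial\nu}{\partial z}|_{z=0}+g\,dz$, and then directly linearizes the holonomy ODE $\dot z+f^{-1}\nu(z)(\dot\gamma)=0$ at $z=0$; the exact part $f^{-1}df$ integrates to zero over the loop and the remaining term gives $l(\gamma)=\exp(\int_\gamma\mu)$. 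Your approach instead uses a chain of genuine foliation charts (so $\nu\equiv 0$ in each chart and $\lambda=g_i\,dz_i$), deduces $\mu|_L=d(\log g_i)$ locally, and recovers $l(\gamma)$ as a telescoping product of transition-derivative ratios. Both are standard; the paper's single-neighborhood linearization is a bit more self-contained but tacitly assumes $\gamma$ has a tubular neighborhood in $L$, whereas your chart-by-chart argument works for any closed loop and makes the connection to the cocycle definition of holonomy explicit.
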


\begin{proof} 
Suppose $\gamma$ is parametrized by $u\in[0,1]$. Let $(\gamma(u),t(u))$ be a curve in $M\times\mathbb{R}$ that is a lift of $\gamma$ and tangent to $\widetilde{\mathcal{F}}$. Then the function $t(u)$ satisfies $\dot{t}+t\mu(\dot{\gamma})=0$. Therefore 
$$
t(1)=e^{-\int_0^1 \mu(\dot{\gamma}) du} \,t(0).
$$

Now let $U$ be a tubular neighborhood of $\gamma$ on the leaf $L$, and let $U\times (-\epsilon,\epsilon)\subset M$ be a tubular neighborhood of $U$ in $M$. 
Parametrize the second factor of $U\times (-\epsilon,\epsilon)$ by $z$, then on this neighborhood of $\gamma$ the 1-form $\lambda$ can be written as $f\cdot dz+\nu(z)$, where $f$ is a nowhere zero function on $U\times(-\epsilon, \epsilon)$, and $\nu(z)$ is a 1-form on $U$ depending on $z$ with $\nu(0)=0$. The restriction of the 1-form $\mu$ on $U$ then has the form $\mu=f^{-1}\,df-f^{-1}\,\frac{\partial\nu}{\partial z}|_{z=0}+g\cdot dz$ for some function $g$. If $(\gamma(u),z(u))$ is a curve in $U\times (-\epsilon,\epsilon)$ tangent to $\mathcal{F}$, then 
\begin{equation} \label{eqn: holonomy}
\dot{z}+f(\gamma,z)^{-1}\cdot\nu(z)(\dot{\gamma})=0.
\end{equation}
If $z_s(u)$, $s\in[0,\epsilon)$ is a smooth family of solutions to \eqref{eqn: holonomy} with $z_0(u)=0$, then the linearized part $l(u)=\frac{\partial z_s}{\partial s}|_{s=0}(u)$ satisfies
$$
\dot{l}+l\cdot f^{-1}(\gamma,0)\cdot\frac{\partial\nu}{\partial z}\Big|_{z=0}(\dot{\gamma})=0.
$$
Therefore the linear holonomy of $\mathcal{F}$ along $\gamma$ is 
$$
e^{-\int_0^1 f^{-1}\cdot\frac{\partial\nu}{\partial z}(0)(\dot{\gamma}(u)) du },$$
which is equal to $e^{\int_0^1 \mu(\dot{\gamma}) du}$, hence the linear holonomy of $\mathcal{F}$ along $\gamma$ is inverse to the holonomy on $\pi^{-1}(\gamma)$ given by $\widetilde{\mathcal{F}}$.
\end{proof}

The following result follows immediately from lemmas \ref{lem:every J curve is a leaf} and \ref{lem: holonomy of tilde F}.

\begin{Corollary} \label{cor:closed leaf of tilde F}
Let $C$ be a closed embedded $J$-holomorphic curve on $X$. Then either $C\subset M\times\{0\}$ and $C$ is a closed leaf of $\mathcal{F}$, or $C$ does not intersect the slice $M\times\{0\}$ and it projects diffeomorphically to a closed leaf of $\mathcal{F}$ with trivial linear holonomy.  \qed
\end{Corollary}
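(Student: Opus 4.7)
The plan is to combine lemmas \ref{lem:every J curve is a leaf} and \ref{lem: holonomy of tilde F} exactly as the corollary suggests. By lemma \ref{lem:every J curve is a leaf}, $C$ is (the image of) a non-constant branched cover of a compact leaf of $\widetilde{\mathcal{F}}$; since $C$ is embedded, this branched covering must actually be a diffeomorphism, so $C$ itself is a compact leaf of $\widetilde{\mathcal{F}}$. The rest of the argument then splits on whether $C$ meets the slice $M\times\{0\}$.

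If $C\cap(M\times\{0\})\ne\emptyset$, then at $t=0$ one has $\widetilde{\mathcal{F}}=\ker(dt+t\mu)\cap\ker\lambda=\ker(dt)\cap\ker\lambda$, so $\widetilde{\mathcal{F}}|_{M\times\{0\}}$ is tangent to that slice and coincides there with the pullback of $\mathcal{F}$. Hence $L\times\{0\}$ is an integral surface of $\widetilde{\mathcal{F}}$ for every $\mathcal{F}$-leaf $L$, and if $p\in C\cap(M\times\{0\})$ lies on the $\mathcal{F}$-leaf $L$, the uniqueness of the $\widetilde{\mathcal{F}}$-leaf through $p$ together with connectedness of $C$ gives $C=L\times\{0\}$, with $L$ a closed leaf of $\mathcal{F}$.

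Suppose instead $C\cap(M\times\{0\})=\emptyset$. The discussion preceding lemma \ref{lem:every J curve is a leaf} notes that the tangent planes of $\widetilde{\mathcal{F}}$ project isomorphically onto those of $\mathcal{F}$, so $\pi|_C:C\to M$ is a local diffeomorphism; compactness and connectedness of $C$ then ensure that $L:=\pi(C)$ is a closed leaf of $\mathcal{F}$ and $\pi|_C$ is a finite-sheeted covering of some degree $k\ge 1$. For any loop $\gamma$ in $L$ based at a point $q$, lemma \ref{lem: holonomy of tilde F} says the $\widetilde{\mathcal{F}}$-holonomy along $\pi^{-1}(\gamma)\cong\gamma\times\mathbb{R}$ is multiplication by $l(\gamma)^{-1}$, so this multiplication must restrict to a permutation of the finite fiber $\pi^{-1}(q)\cap C\subset\{q\}\times\mathbb{R}$. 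Every $t$-coordinate appearing in this fiber is nonzero (since $C$ avoids $M\times\{0\}$), and $l(\gamma)>0$ because $\mathcal{F}$ is cooriented; if $l(\gamma)\ne 1$ the orbit of any nonzero $t$ under iteration by $l(\gamma)^{-1}$ is infinite, contradicting finiteness of the fiber. Hence $l(\gamma)=1$ for every $\gamma$, i.e.\ $L$ has trivial linear holonomy. The same identity makes the monodromy representation of the covering $\pi|_C$ trivial, and since $C$ is connected this forces $k=1$, so $\pi|_C:C\to L$ is a diffeomorphism. The only genuinely non-formal point is this last one, and the useful trick is that a positive real multiplier preserving a finite subset of $\mathbb{R}\setminus\{0\}$ is necessarily $1$, which yields both $l(\gamma)=1$ and $k=1$ simultaneously.
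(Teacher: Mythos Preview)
Your proof is correct and follows the same approach that the paper indicates, namely combining lemmas \ref{lem:every J curve is a leaf} and \ref{lem: holonomy of tilde F}; the paper itself gives no argument beyond the \qed, so you have simply supplied the details the author left implicit. The finite-orbit trick for a positive real multiplier is exactly the right way to extract both $l(\gamma)=1$ and $k=1$ from lemma \ref{lem: holonomy of tilde F}.
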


The next lemma studies $J$-holomorphic tori on $X$.

\begin{Lemma}\label{lem: nondegeneracy}
Suppose $T$ is a torus leaf of $\mathcal{F}$ with non-trivial linear holonomy. Then $T\times \{0\}$ is a nondegenerate $J$-holomorphic curve in $X$.
\end{Lemma}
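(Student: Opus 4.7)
The plan is to establish $\ker L = 0$; combined with $\ind L = 0$ this gives surjectivity and thus nondegeneracy. The index vanishes because $\partial_t$ is a nowhere-vanishing section of $N$ with $J\partial_t = e_0$ linearly independent of $\partial_t$ at $t=0$ (so $N$ is complex trivial and $\langle c_1(N), [T]\rangle = 0$), and the torus $T$ with trivial normal bundle has $\langle c_1(T^{0,1}X), [T]\rangle = 0$. Moreover $T \times \{0\}$ is $J$-holomorphic because at $t = 0$ one has $\hat e_i = e_i$, so $T\widetilde{\mathcal{F}}|_{M \times \{0\}} = T\mathcal{F}$; that is, $T \times \{0\}$ is a closed leaf of the $J$-invariant foliation $\widetilde{\mathcal{F}}$.

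To compute $\ker L$, work in local coordinates $(x, y, \zeta, t)$ on $X$ with $T = \{\zeta = t = 0\}$ and $\zeta$ transverse to $T$ in $M$, writing $\lambda = f\, d\zeta + \zeta\alpha + O(\zeta^2)$ with $f > 0$ on $T$. Parametrize a nearby surface as a graph $p \mapsto (p, \zeta(p), t(p))$ and linearize equation \eqref{eqn:J holomorphic for sections} at $\zeta = t = 0$, using $d\lambda = \mu \wedge \lambda$ together with $J\partial_t = e_0$ and $J\hat e_1 = \hat e_2$ at $t = 0$. The resulting operator $L$ is a real-linear Cauchy--Riemann operator on $N$ whose zeroth-order term is controlled by $\mu|_T$; writing sections of $N$ as pairs $(\zeta(p), t(p))$ via the frame $(e_0, \partial_t)$, $\ker L$ is identified with the space of sections of $N$ that are parallel along $T \times \{0\}$ under the Bott partial connection of $\widetilde{\mathcal{F}}$.

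The linearized Bott holonomy along a loop $\gamma \subset T$ is diagonal in the frame $(e_0, \partial_t)$ with entries $(l(\gamma), l(\gamma)^{-1})$: the $\partial_t$-entry is given by Lemma \ref{lem: holonomy of tilde F}, and the $e_0$-entry equals the linear holonomy $l(\gamma) = e^{\int_\gamma \mu|_T}$ of $\mathcal{F}$ itself, since $\widetilde{\mathcal{F}}$ agrees with $\mathcal{F}$ on $M \times \{0\}$. Under the hypothesis of non-trivial linear holonomy, some $\gamma$ satisfies $l(\gamma) \neq 1$, so neither eigenvalue of this holonomy equals $1$ and the only parallel vector is zero. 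Hence $\ker L = 0$. The main obstacle is the identification $\ker L = \{\text{parallel sections of Bott}\}$, which requires carefully computing the zeroth-order term of $L$ from \eqref{eqn:J holomorphic for sections} in $\widetilde{\mathcal{F}}$-adapted coordinates and using the $J$-invariance of $\widetilde{\mathcal{F}}$ to control the terms $\tau, Q, T$ there.
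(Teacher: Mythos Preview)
Your outline is sound and the holonomy computation is correct, but the route you take is genuinely different from the paper's, and the step you flag as the ``main obstacle'' is precisely what the paper avoids.

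You proceed by computing $L$ directly and identifying $\ker L$ with the space of Bott-parallel sections of $N$ along $T_0$. This identification is true, and your diagonal holonomy matrix $(l(\gamma), l(\gamma)^{-1})$ is correct (it follows from $\widetilde{\mathcal{F}} = \ker(dt+t\mu)\cap\ker\lambda$, linearizing each factor separately). But as you note, verifying $\ker L = \{\text{Bott-parallel sections}\}$ by hand means computing the zeroth-order term $\partial T/\partial s$ of \eqref{eqn:J holomorphic for sections} explicitly and checking that the resulting real Cauchy--Riemann operator has exactly the Bott-parallel sections as its kernel---not merely that Bott-parallel sections lie in the kernel. This is not automatic: a priori $\pi^{0,1}\circ\nabla^{\mathrm{Bott}}$ could have a larger kernel than $\nabla^{\mathrm{Bott}}$ itself.

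The paper sidesteps this computation entirely. It replaces $J$ by an auxiliary $J'$ built from the \emph{linearized} foliation $\lambda' = dz + z\beta$ (where $\beta = \partial\nu/\partial z|_{z=0}$), chosen so that $J'$ agrees with $J$ to first order along $T_0$. Since nondegeneracy depends only on the $1$-jet of the almost complex structure along the curve, it suffices to prove nondegeneracy for $J'$. The point of $J'$ is that the full nonlinear equation \eqref{eqn:J holomorphic for sections} becomes \emph{linear}, so $\ker L$ coincides with the set of actual nearby $J'$-holomorphic tori. Now the argument of Lemmas~\ref{lem:every J curve is a leaf} and~\ref{lem: holonomy of tilde F} applies to $J'$ and shows $T_0$ is isolated among such tori, hence $\ker L = 0$.

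What each approach buys: your direct computation, once completed, would make the role of the Bott connection explicit and would generalize more readily to other settings. The paper's trick of passing to $J'$ is shorter and replaces an analytic computation by a geometric one (isolation of $T_0$ among leaves), at the cost of introducing the auxiliary almost complex structure and invoking the $1$-jet dependence of nondegeneracy.
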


\begin{proof}
Notice that $d([T])=0$, thus the index of the deformation operator is zero, and one only needs to prove that for $T$ the operator $L$ defined by equation \eqref{eqn: deformation operator} has a trivial kernel.

Let $T_0=T\times \{0\}$ be the torus in $X$. Recall that locally $(\lambda,e^1,e^2)$ is an orthonormal basis of $T^*M$ and $(e_0,e_1,e_2)$ is its dual basis. Let $T\times (-\epsilon,\epsilon)\subset M$ be a tubular neighborhood of $T$ in $M$ such that the fibers of $(-\epsilon,\epsilon)$ are flow lines of $e_0$.
Choose a parametrization $z$ for the second factor of $T\times (-\epsilon,\epsilon)$, such that $\lambda(\frac{\partial}{\partial z})=1$. Then on this neighborhood $e_0=\frac{\partial}{\partial z}$, and $\lambda$ has the form $\lambda=dz+\nu(z)$ where $\nu(z)$ is a 1-form on $T$ depending on $z$ and $\nu(0)=0$. The condition that $\ker \lambda$ is a foliation is equivalent to 
$$
d\nu+\frac{\partial\nu}{\partial z}\wedge\nu=0.
$$
Let $\beta=\frac{\partial\nu}{\partial z}|_{z=0}$. Apply $\frac{\partial}{\partial z}$ on the equation above at $z=0$, one obtains $d\beta=0$. Let $\lambda'=dz+z\cdot \beta$, then $\ker\lambda'$ defines another foliation near $T$. Let $\mu'= -\beta$.

Let $e_1'$, $e_2'$ be vector fields on $T\times(-\epsilon,\epsilon)$ such that they are tangent to $\ker \lambda'$, and their projections to $T$ form a positive orthonormal basis. Extend $e_1'$, $e_2'$ to a neighborhood of $T_0$ in $X$ by translation on the $t$-coordinate. Define an almost complex structure $J'$ on $T\times (-\epsilon,\epsilon)$ as
\begin{align*}
J'\frac{\partial}{\partial t} &=\frac{\partial}{\partial z}, \\
J' (e_1'-t\mu'(e_1')\frac{\partial}{\partial t}) &= e_2'-t\mu'(e_2')\frac{\partial}{\partial t} .
\end{align*}

Since $T$ has nontrivial linear holonomy, the same argument as in lemma \ref{lem:every J curve is a leaf} and lemma \ref{lem: holonomy of tilde F} shows that $T_0$ is the only embedded $J'$-holomorphic torus in a neighborhood of $T_0$. On the other hand, a straight forward calculation shows that the equation \eqref{eqn:J holomorphic for sections} for deformation of $J'$-holomorphic curves near $T_0$ is a linear equation, therefore $T_0$ is nondegenerate as a $J'$-holomorphic curve. Since $J'$ and $J$ agree up to first order derivatives along the curve $T_0$, this proves that $T_0$ is nondegenerate with respect to $J$.
\end{proof}

\section{Proof of theorem \ref{thm: main}} \label{sec: proof}

Now let $\mathcal{L}$ be a cooriented smooth taut foliation on a smooth 3-manifold $Y$. Consider its orientation double cover $\widetilde{\mathcal{L}}$. It is an oriented and cooriented taut foliation on the orientation double cover of $Y$. Let $p:\widetilde{Y} \to Y$ be the covering map. If $K$ is a Klein-bottle leaf of $\mathcal{L}$, then $p^{-1}(K)$ is a torus leaf of $\widetilde{L}$. Recall that in the beginning of section \ref{sec: statement of result}, a homology class $PD[K]\in H^1(Y;\mathbb{Z})$ was defined for every Klein-bottle leaf.

\begin{Lemma} \label{lem: pull back of Poincare dual}
Let $K$ be a Klein-bottle leaf of $\mathcal{L}$. Let $PD[p^{-1}(K)]$ be the Poincare dual of the fundamental class of $p^{-1}(K)$. Then $p^*(PD[K])=PD[p^{-1}(K)]$.
\end{Lemma}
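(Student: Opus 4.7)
The plan is to verify the identity by evaluating both cohomology classes against an arbitrary homology class $[\tilde\gamma]\in H_1(\widetilde{Y};\mathbb{Z})$ represented by a smooth closed curve that is transverse to $p^{-1}(K)$. By naturality of the pullback,
$$
\langle p^*PD[K],[\tilde\gamma]\rangle = \langle PD[K], p_*[\tilde\gamma]\rangle,
$$
and the definition of $PD[K]$ recalled in the introduction identifies this with the oriented intersection number of the (possibly immersed) loop $p\circ\tilde\gamma$ with $K$, using the coorientation of $K$ to determine signs.

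For the right-hand side I would first record the orientation conventions. Since $p$ is a local diffeomorphism, the coorientation of $K$ pulls back to a coorientation of the torus $p^{-1}(K)$. Combined with the canonical orientation of the orientation double cover $\widetilde{Y}$, this pins down an orientation of $p^{-1}(K)$, which is exactly the one needed for $PD[p^{-1}(K)]$ to coincide, via the remark in the introduction, with the homomorphism $[\tilde\gamma]\mapsto \tilde\gamma \cdot p^{-1}(K)$, the oriented intersection number computed from the coorientation.

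The main step is then to match the two intersection counts. Under the transversality assumption, the map $\tilde q\mapsto p(\tilde q)$ gives a bijection between the finite set $\tilde\gamma\cap p^{-1}(K)$ and the multiset $(p\circ\tilde\gamma)\cap K$. At each intersection point $\tilde q$, the local diffeomorphism $p$ sends the velocity vector of $\tilde\gamma$ at $\tilde q$ to the velocity vector of $p\circ\tilde\gamma$ at $p(\tilde q)$, and carries the coorientation of $p^{-1}(K)$ at $\tilde q$ to the coorientation of $K$ at $p(\tilde q)$ by construction. Consequently the local intersection signs at $\tilde q$ and at $p(\tilde q)$ agree, and summing yields $\langle PD[p^{-1}(K)],[\tilde\gamma]\rangle = \langle p^*PD[K],[\tilde\gamma]\rangle$ for every $[\tilde\gamma]$, which proves the equality in $H^1(\widetilde{Y};\mathbb{Z})$.

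No step is genuinely hard; the only point requiring care is the bookkeeping of orientations, namely checking that the fundamental class of $p^{-1}(K)$ implicitly used in the statement is the one induced by the pulled-back coorientation together with the orientation of $\widetilde{Y}$. Once this convention is fixed, the proof is simply the standard locality of signed intersection numbers under covering maps.
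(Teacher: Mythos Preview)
Your proof is correct and follows essentially the same route as the paper: both arguments evaluate the two cohomology classes on an arbitrary closed curve $\tilde\gamma$ in $\widetilde{Y}$, invoke the naturality $\langle p^*PD[K],[\tilde\gamma]\rangle=\langle PD[K],p_*[\tilde\gamma]\rangle$, and reduce the claim to the equality of signed intersection numbers $I(p^{-1}(K),\tilde\gamma)=I(K,p\circ\tilde\gamma)$ under the covering map. The paper's version is a four-line computation that suppresses the orientation bookkeeping you spell out; your additional care about how the coorientation pulls back and how this fixes the fundamental class of $p^{-1}(K)$ is a welcome clarification but does not change the underlying argument.
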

\begin{proof}
Let $\gamma$ be a closed curve in $\widetilde{Y}$. Use $I(\cdot,\cdot)$ to denote the intersection number. Then
\begin{align*}
\langle PD[p^{-1}(K)], [\gamma] \rangle & = I(p^{-1}(K),\gamma) \\
&=I(K,p(\gamma)) = \langle PD[K], p_*[\gamma] \rangle =\langle p^*(PD[K]), [\gamma] \rangle.
\end{align*}
Therefore $p^*(PD[K])=PD[p^{-1}(K)]$.
\end{proof}

\begin{Lemma}\label{lem:pull back is injective}
The pull-back map $p^*:H^1(Y;\mathbb{Z})\to H^1(\widetilde{Y};\mathbb{Z})$ is injective.
\end{Lemma}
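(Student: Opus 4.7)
The plan is to reduce the problem from cohomology to homology via the universal coefficient theorem, and then exploit a standard transfer-type argument available for double covers. Since $H_0$ of any space is free abelian, universal coefficients gives a natural isomorphism $H^1(Y;\mathbb{Z}) \cong \Hom(H_1(Y;\mathbb{Z}),\mathbb{Z})$ and likewise for $\widetilde{Y}$, and under these identifications $p^*$ becomes precomposition with the push-forward $p_* : H_1(\widetilde{Y};\mathbb{Z}) \to H_1(Y;\mathbb{Z})$. Injectivity of $p^*$ will therefore follow once I establish the inclusion $2 \cdot H_1(Y;\mathbb{Z}) \subseteq \Ima(p_*)$, because the target $\mathbb{Z}$ is torsion-free.

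To prove that inclusion I would argue by lifting loops. Fix a basepoint $y_0 \in Y$ and let $\gamma$ be an arbitrary loop at $y_0$. Choose a preimage $\tilde y_0 \in p^{-1}(y_0)$ and lift $\gamma$ to a path $\tilde\gamma$ starting at $\tilde y_0$. If $\tilde\gamma$ ends at $\tilde y_0$ we are done, since then $[\gamma]$ itself already lies in $\Ima(p_*)$. Otherwise $\tilde\gamma$ ends at the other preimage $\tilde y_1$; lifting $\gamma$ once more, this time starting at $\tilde y_1$, the deck involution forces the endpoint to be $\tilde y_0$. Concatenating these two lifts produces a genuine loop in $\widetilde{Y}$ whose projection is $\gamma \cdot \gamma$, so $2[\gamma] \in \Ima(p_*)$ in either case. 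In the degenerate situation where $Y$ is already orientable and $\widetilde{Y} = Y \sqcup Y$, the map $p^*$ is literally the diagonal $\alpha \mapsto (\alpha,\alpha)$ and injectivity is immediate, so I may assume $\widetilde{Y}$ is connected when carrying out the lifting argument.

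Putting the pieces together, suppose $\alpha \in H^1(Y;\mathbb{Z})$ satisfies $p^*\alpha = 0$. Viewing $\alpha$ as a homomorphism $H_1(Y;\mathbb{Z}) \to \mathbb{Z}$, the hypothesis says that $\alpha$ vanishes on $\Ima(p_*)$, and in particular on $2 \cdot H_1(Y;\mathbb{Z})$. Hence $2\alpha([\gamma]) = 0$ for every class $[\gamma]$, and since $\mathbb{Z}$ has no nontrivial $2$-torsion we conclude $\alpha = 0$.

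The step I expect to require the most care is the middle one, namely the verification that $2 \cdot H_1(Y;\mathbb{Z}) \subseteq \Ima(p_*)$; one has to be attentive to basepoints and to the two qualitatively different cases depending on whether or not $\widetilde{Y}$ is connected. Everything on either side of that step (the universal coefficient identification and the final torsion-freeness argument) is purely formal.
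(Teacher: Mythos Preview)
Your proof is correct and follows essentially the same idea as the paper's: both identify $H^1(\,\cdot\,;\mathbb{Z})$ with $\Hom$ into $\mathbb{Z}$, observe that an element of $\ker p^*$ vanishes on the image of $p_*$, and then use that this image has index at most $2$ together with torsion-freeness of $\mathbb{Z}$ to conclude. The paper's version is marginally more streamlined in that it works with $\pi_1$ rather than $H_1$, so the index-$2$ statement is immediate from covering space theory and the factorization $\pi_1(Y)\to\mathbb{Z}/2\to\mathbb{Z}$ replaces your explicit loop-lifting step; but the content is the same.
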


\begin{proof} 
Every element in $\ker p^*$ is represented by an element  $\alpha\in\Hom ( \pi_1(S), \mathbb{Z} )$ such that $\alpha$ is zero on the image of $p_*:\pi_1(\widetilde{Y})\to \pi_1(Y)$. Since $\Ima p_*$ is a normal subgroup of $\pi_1(Y)$ of index 2, the map $\alpha$ is decomposed as 
$$
\alpha: \pi_1(Y) \to \pi_1(Y)/\pi_1(\widetilde{Y}) \cong \mathbb{Z}/2 \to \mathbb{Z},
$$
which has to be zero. Therefore $p^*$ is injective.
\end{proof}

By lemma \ref{lem: pull back of Poincare dual} and \ref{lem:pull back is injective}, a Klein-bottle leaf $K$ has $PD[K]=A$ if and only if $PD([p^{-1}(K)])=p^*(A)$.
The next lemma shows that for every Klein-bottle leaf $K$ of $\mathcal{L}$ the fundamental class $[p^{-1}(K)]$ is a primitive class.

\begin{Lemma}\label{lem: primitive}
Let $\mathcal{F}$ be an oriented and cooriented taut foliation on a smooth three manifold $M$, then the fundamental class of every closed leaf of $\mathcal{F}$ is a primitive class.
\end{Lemma}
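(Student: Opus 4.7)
The plan is to produce a closed loop $\gamma \subset M$ with signed geometric intersection number $I(\gamma, F) = 1$. Primitivity of $[F]$ then follows at once: if $[F] = nc$ in $H_2(M;\mathbb{Z})$ for some $c$ and some $n>1$, then by bilinearity of the intersection pairing $I(\gamma, F) = n\cdot I(\gamma, c)$ would be divisible by $n$, contradicting $I(\gamma, F) = 1$. Note that $M$ is orientable, since $TM \cong T\mathcal{F}\oplus N\mathcal{F}$ is a sum of two oriented bundles, and $F$ is oriented as a leaf, so the intersection pairing with $F$ is well-defined.

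To construct $\gamma$, I would first invoke tautness to obtain a closed curve $\gamma_0$ transverse to $\mathcal{F}$ passing through a chosen point $p \in F$. Orienting $\gamma_0$ compatibly with the coorientation of $\mathcal{F}$, every intersection of $\gamma_0$ with $F$ is positive, so $k := I(\gamma_0, F) \geq 1$; if $k=1$, set $\gamma = \gamma_0$ and we are done. Otherwise perform the following surgery: pick two cyclically consecutive intersections $p_1, p_2$ along $\gamma_0$, and let $\alpha \subset \gamma_0$ be the open sub-arc from a point just past $p_1$ to a point just before $p_2$. Because $p_1$ and $p_2$ are consecutive positive crossings, $\alpha$ has no interior intersection with $F$, starts on the positive side of $F$, and ends on the negative side. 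In a tubular neighborhood $U \cong F \times (-\epsilon,\epsilon)$ of $F$, connect the endpoints of $\alpha$, say $(q_2, -\delta)$ and $(q_1, \delta)$, by an arbitrary smooth arc $\tau$; such an arc exists because $U$ is path-connected.

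The key observation is that $\tau$ needs only to be transverse to $F$, not to the full foliation $\mathcal{F}$. Since $\tau$ lies in $U$ and travels from the half $\{t<0\}$ to the half $\{t>0\}$, a direct count of $t$-sign changes shows that its signed intersection with $F = \{t=0\}$ equals $+1$, regardless of the detailed path. The closed loop $\gamma := \alpha \cup \tau$ (smoothed at its two corners, both of which lie off $F$) therefore satisfies $I(\gamma, F) = I(\alpha, F) + I(\tau, F) = 0 + 1 = 1$, completing the proof. The step most likely to cause confusion on a first reading is the construction of $\tau$: one is tempted to demand that $\tau$ be transverse to $\mathcal{F}$, which would require a careful analysis of the foliation's behavior inside $U$. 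Recognizing that transversality to $\mathcal{F}$ is irrelevant for counting intersections with $F$ reduces the construction to the elementary task of connecting two points in a path-connected open set.
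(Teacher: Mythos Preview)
Your proof is correct and follows the same strategy as the paper: use tautness to obtain a transverse circle through the leaf, then extract from it a loop with algebraic intersection number $1$ by taking an arc between two consecutive crossings and closing it up. The only difference is in the closing step --- the paper slides the endpoint along the foliation so that it coincides with the starting point (thereby keeping the resulting curve transverse to all of $\mathcal{F}$), whereas you connect by an arbitrary path in a tubular neighborhood, correctly observing that only transversality to the single leaf $F$ is needed for the intersection count.
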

 
\begin{proof}
Let $L$ be a closed leaf of $\mathcal{F}$. Take a point $p\in L$. By the definition of tautness, there exists an embedded circle $\gamma$ passing through $p$ and transverse to the foliation. Let $\gamma:[0,1]\to M$ with $\gamma(0)=\gamma(1)=p$ be a parametrization of $\gamma$. By transversality, $\gamma^{-1}(L)$ is a finite set. Let $t_0$ be the minimum value of $t>0$ such that $\gamma(t_0)\in L$. Then for $\epsilon$ sufficiently small one can slide the part of $\gamma$ on $(t_0-\epsilon, t_0+\epsilon)$ along the foliation, such that the resulting curve is still transverse to $\mathcal{F}$, and such that $\gamma(t_0)=p$. Now $\gamma|_{[0,t_0]}$ defines a circle whose intersection number with $L$ equals 1. The existence of such a curve implies that the fundamental class of $L$ is primitive.
\end{proof}

With the preparations above, one can now prove theorem \ref{thm: main}.

\begin{proof}[Proof of theorem \ref{thm: main}]
Let $A\in H^1(Y;\mathbb{Z})$. Suppose $\mathcal{L}_0$ and $\mathcal{L}_1$ are two smooth $A$-admissible taut foliations on $Y$, such that they can be deformed to each other by a smooth family of taut foliations $\mathcal{L}_s$, $s\in [0,1]$. Let $\widetilde{Y}$ be the orientation double cover of $Y$. Then the orientation double covers $\widetilde{\mathcal{L}}_s$ pf $\mathcal{L}_s$ form a smooth family of oriented and cooriented taut foliaitons on $\widetilde{Y}$. 

Let $\tilde{\sigma}: \widetilde{Y} \to \widetilde{Y}$ be the deck transformation of the orientation double cover. Then the map $\tilde{\sigma}$ preserves the coorientation of $\widetilde{L}_s$ and reverses its orientation for each $s$. 

There exists a smooth family of 1-forms $\lambda_s$ and closed 2-forms $\omega_s$ on $\widetilde{Y}$ such that $\widetilde{L}_s=\ker\lambda_s$ and $\lambda_s\wedge\omega_s>0$. By changing $\lambda_s$ to $(\lambda_s+\tilde{\sigma}^*\lambda_s)/2$ and changing $\omega_s$ to $(\omega_s-\tilde{\sigma}^*\omega_s)/2$, one can assume that $\tilde{\sigma}^*\lambda_s=\lambda_s$, and $\tilde{\sigma}^*\omega_s=-\omega_s$. Let $(\Omega_s,J_s)$ be the corresponding symplectic structures and almost complex structures on $X=\widetilde{Y}\times\mathbb{R}$. Define 
\begin{eqnarray*}
 \sigma:  &           X  &\to         X \\
          &         (x,t) &\mapsto (\tilde{\sigma}(x),-t).
\end{eqnarray*}
Then $\sigma^*(\Omega_s)=-\Omega_s$, and $\sigma^*(J_s)=-J_s$. The family $\{(X,\Omega_s,J_s)\}$ has uniformly bounded geometry. This means that there is a constant $N>0$ such that $J_s\in \mathcal{J}(X,\Omega_s,N)$ for each $s$. 

If neither $\mathcal{L}_0$ nor $\mathcal{L}_1$ has any Klein-bottle leaf in the class $A$, the statement of theorem \ref{thm: main} obviously holds. From now on assume that either $\mathcal{L}_0$ or $\mathcal{L}_1$ has at least one Klein-bottle leaf in the class $A$. Let $e$ be the push forward of $PD(p^*(A))\in H_2(\widetilde{Y};\mathbb{Z})$ to $H_2(X;\mathbb{Z})$ via the inclusion map $\widetilde{Y}\cong \widetilde{Y}\times\{0\} \xhookrightarrow{} X$. The class $e$ then satisfies $\sigma_*(e)=-e$. By lemma \ref{lem: primitive}, $e$ is a primitive class. Roussarie-Thurston theorem implies that $\pi_2(X)=\pi_2(\widetilde{Y}) = 0$. 

Take a positive constant $E$ such that $E>\langle [\Omega_s], e \rangle $ for all $s$. Let $M(N,E)$ be the diameter upper bound from lemma \ref{lem: bound on diameters of J curve} for the geometry bound $N$ and the energy bound $E$. Let $T_0>0$ be sufficiently large such that the distance of $\widetilde{Y}\times \{T_0\}$ and $\widetilde{Y}\times \{-T_0\}$ is greater than $M(N,E)+1$ for every metric $g_s$ induced from $(\Omega_s,J_s)$. 

For $i=0,1$, the union of torus leaves $L$ in $\widetilde{\mathcal{L}}_i$ in the homology class $p^*(A)$ such that $\int_L\omega_i\le E$ and $L$ is not the lift of any Klein-bottle leaf form a compact set $\widetilde{B}_i$. The set $\widetilde{B}_i$ satisfies $\tilde{\sigma}(\widetilde{B}_i)=\widetilde{B}_i$. Let $\widetilde{U}_i$ be a neighborhood of $\widetilde{B}_i$ such that $\tilde{\sigma}(\widetilde{U}_i)=\widetilde{U}_i$ and the closure of $\widetilde{U}_i$ does not intersect the lift of any Klein-bottle leaf of $\mathcal{L}_i$. Let 
$$
V=\widetilde{Y}\times \big((-\infty, -T_0) \cup  (T_0,\infty)\big),
$$
$$
U_i=\big(\widetilde{U}_i\times \mathbb{R} \big)\bigcup\widetilde{Y}\times \big((-\infty, -T_0) \cup  (T_0,\infty)\big)
$$
which are open subsets of $X$.
The following two lemmas will be proved in section \ref{sec: technical lemmas}.
\begin{Lemma} \label{lem: Z2 admissibility of Ji}
The almost complex structure $J_i$ can be perturbed to $J_i'\in \mathcal{J}(X,\Omega_i,N)$, such that $J_i'=J_i$ near Klein-bottle leaves, and $J_i'$ is $E$-admissible. Moreover, $\sigma^*(J_i')=-J_i'$ on $U_i$, and every $J_i'$-holomorphic torus of $X$ in the homology class $e$ is either contained in $U_i$ or is the lift of a Klein-bottle leaf in $\mathcal{L}_i$ in the class $A$. If $C$ is a $J_i'$-holomorphic curve in the homology class $e$ contained in $U_i$, then $\sigma(C)\neq C$.
\end{Lemma}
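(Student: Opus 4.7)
The plan is to obtain $J_i'$ as a small generic perturbation of $J_i$, supported away from a fixed open neighborhood $W$ of the lifts of Klein-bottle leaves in $X$ and $\sigma$-equivariant on $U_i$. First I will verify that the containment claim already holds for $J_i$. By Corollary \ref{cor:closed leaf of tilde F}, any $J_i$-holomorphic torus $C$ in class $e$ with $\int_C\Omega_i\le E$ either lies in $\widetilde{Y}\times\{0\}$ as a torus leaf of $\widetilde{\mathcal{L}}_i$ or is disjoint from $\widetilde{Y}\times\{0\}$ and projects diffeomorphically to such a leaf with trivial linear holonomy. In the first case $C$ is either a Klein-bottle lift (already nondegenerate by admissibility together with Lemma \ref{lem: nondegeneracy}) or it lies in $\widetilde{B}_i\times\{0\}\subset\widetilde{U}_i\times\mathbb{R}\subset U_i$. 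In the second case the target leaf has trivial linear holonomy and so is not a Klein-bottle lift, hence lies in $\widetilde{B}_i$, placing $C$ inside $\widetilde{U}_i\times\mathbb{R}\subset U_i$.

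Next I will check $\sigma(C)\neq C$ for any class-$e$ torus $C\subset U_i$. If $C$ is disjoint from $\widetilde{Y}\times\{0\}$ this is immediate because $\sigma$ reverses the sign of $t$. If $C\subset\widetilde{Y}\times\{0\}$, then $C$ is not a Klein-bottle lift, so $p(C)$ is a torus leaf of $\mathcal{L}_i$; coorientability of $\mathcal{L}_i$ together with $w_1(Tp(C))=0$ yields $w_1(TY|_{p(C)})=0$, so $p^{-1}(p(C))$ splits into two disjoint tori that $\tilde{\sigma}$ interchanges, forcing $\tilde{\sigma}(C)\neq C$. The same reasoning shows more generally that any $\sigma$-invariant embedded $J_i$-holomorphic torus (in any class) must be a Klein-bottle lift, and such lifts are already nondegenerate.

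For the perturbation itself I would work within the affine space of $J'\in\mathcal{J}(X,\Omega_i,N)$ that equal $J_i$ on $W$ and satisfy $\sigma^*J'=-J'$ on $U_i$, and apply the standard transversality machinery combined with Lemma \ref{lem: diagonal argument} over a locally finite cover of $X$. Condition (\ref{condition 2}) of Definition \ref{def: admissible} is achieved generically since the relevant tori have negative formal dimension. For condition (1), the relevant embedded curves with $d([C])=0$ and energy at most $E$ are tori; for Klein-bottle lifts nondegeneracy is preserved since we do not perturb on $W$, and for every other torus the observation $\sigma(C)\neq C$ lets a perturbation $\delta J$ supported near $C$ be paired with $-\sigma^*(\delta J)$ supported near $\sigma(C)$, giving an equivariant perturbation with the same effective freedom at $C$ as an unconstrained one, so the usual cokernel-killing argument goes through. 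This produces an open and dense subset of allowable $J'$.

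The main obstacle, and the step I would treat most carefully, is showing that the containment property persists for $J'$ sufficiently close to $J_i$. I would argue by contradiction using Gromov compactness. A sequence of allowable perturbations $J_n'\to J_i$ with $J_n'$-holomorphic class-$e$ tori $C_n$, none of which is a Klein-bottle lift but each of which meets the compact set $X\setminus U_i$, would subconverge (using $\pi_2(X)=0$ and the primitivity of $e$) to an embedded $J_i$-holomorphic torus in class $e$ meeting $X\setminus U_i$; by the classification above this limit must itself be a Klein-bottle lift. Since $J_n'=J_i$ on $W$ and such a lift is nondegenerate, the implicit function theorem forces $C_n$ to coincide with the lift for all large $n$, a contradiction. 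Choosing $J_i'$ inside a sufficiently small neighborhood of $J_i$ within the open-dense set from the previous paragraph then yields the required almost complex structure.
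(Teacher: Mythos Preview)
Your argument is correct and close in spirit to the paper's, but organized differently. The paper introduces a quantitative notion --- \emph{$(d,E)$-regularity} of $J$ with respect to a closed $\sigma$-invariant set $B$, meaning every $J$-holomorphic torus map of energy at most $E$ has image either at distance $>d$ from $B$ or at distance $>d$ from its $\sigma$-image --- and proves once and for all (Lemmas~\ref{lem: openness of regularity} and~\ref{lem: Z2 admissibility}) that this condition is open and that within it one can perturb $\sigma$-equivariantly on $B$ to achieve $E$-admissibility while fixing $J$ on any set $H$ where admissibility already holds. The paper then simply checks that $J_i$ is $(d,E)$-regular with respect to $\overline{U_i}$, applies those lemmas with $H=W_i$, and invokes Lemma~\ref{lem: existence of J curve is a closed condition} (your Gromov-compactness step) for the containment claim; the final assertion $\sigma(C)\neq C$ for $J_i'$-curves then falls out of $(d,E)$-regularity itself. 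Your route is more hands-on: you verify containment and $\sigma(C)\neq C$ for $J_i$ directly (the $w_1$ argument for torus leaves is a nice touch the paper leaves implicit), then perturb. Two places in your write-up are slightly loose and are precisely what the $(d,E)$-regularity framework buys: the equivariant cokernel-killing really needs $C\cap\sigma(C)=\emptyset$ with a definite gap that persists \emph{throughout} the perturbation process (not merely $\sigma(C)\neq C$ for $J_i$-curves), and the lemma's last clause concerns $J_i'$-holomorphic tori rather than $J_i$-holomorphic ones. Both are straightforward to patch by the same Gromov-compactness reasoning you already use for the containment step, so your approach goes through.
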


\begin{Lemma} \label{lem: Z2 admissibility of a family for taut foliation}
The almost complex structures $J_0'$ and $J_1'$ given by lemma \ref{lem: Z2 admissibility of Ji} can be connected by a smooth family of almost complex structures $$J_s'\in \mathcal{J}(X, \Omega_s, N),$$ such that $\sigma^*(J_s')=-J_s'$ on $V$, and the moduli space $\mathcal{M}(X,\{J_s'\},e)=\coprod_{s\in [0,1]}\mathcal{M}(X,J_s',e)$ has the structure of a smooth 1-manifold with boundary $\mathcal{M}(X,J_0',e) \cup \mathcal{M}(X,J_1',e)$. Moreover, let $t:X\to \mathbb{R}$ be the projection of $X=\widetilde{Y}\times\mathbb{R}$ to $\mathbb{R}$, then the function defined as
\begin{align*}
\mathfrak{f}:\mathcal{M}(X,\{J_s'\},e) & \to \mathbb{R} \\
                     C   & \mapsto \big(\int_C t \, dA\big)/\big(\int_C 1 \, dA\big).
\end{align*}
is a smooth proper function on $\mathcal{M}(X,\{J_s'\},e)$, where $dA$ is the area form of $C$.
\end{Lemma}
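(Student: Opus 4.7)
The plan is to first construct an initial smooth family of almost complex structures connecting $J_0'$ and $J_1'$ that respects the antiequivariance $\sigma^*(J_s') = -J_s'$ on $V$, then apply a generic equivariance-preserving perturbation to achieve transversality, thereby reducing the problem to the framework of Lemma \ref{lem: regularity of moduli space for the noncompact case}.

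For the initial construction, take any smooth path $\{\tilde{J}_s\}$ in $\mathcal{J}(X, \{\Omega_s\}, J_0', J_1', N)$ and modify it on $V$ via the antisymmetrization $\tilde{J}_s \mapsto (\tilde{J}_s - (-\sigma^* \tilde{J}_s))/2$, smoothly cut off to blend with the unmodified family outside a slightly smaller open set. Since $J_0'$ and $J_1'$ already satisfy $\sigma^*(J_i') = -J_i'$ on $U_i \supset V$, the endpoints remain fixed, and the bounding constant $N$ is preserved because $\sigma$ is an isometry. For the equivariant transversality step, the crucial observation is that $\sigma_*(e) = -e \neq e$, so for any $J_s'$-holomorphic curve $C$ representing $e$, the image $\sigma(C)$ represents $-e$ and in particular $\sigma(C) \neq C$. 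Given such a $C$ and a point $p \in C$ with $\sigma(p) \notin C$, one can perturb $J_s'$ in a small ball around $p$ disjoint from $\sigma(C)$ and extend the perturbation equivariantly via $-\sigma^*$ to a disjoint ball around $\sigma(p)$; this provides full freedom of perturbation at $p$, so the standard cokernel-killing argument proceeds unchanged. Combined with part 2 of Lemma \ref{lem: diagonal argument} applied to a locally finite cover of $X$ whose sets each exceed the diameter bound $M(N,E)$ of Lemma \ref{lem: bound on diameters of J curve}, this yields an open dense subset of equivariant families for which $\mathcal{M}(X, \{J_s'\}, e)$ is a smooth 1-manifold with boundary $\mathcal{M}(X, J_0', e) \cup \mathcal{M}(X, J_1', e)$.

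Properness of $\mathfrak{f}$ then follows essentially verbatim from the argument in the proof of Lemma \ref{lem: regularity of moduli space for the noncompact case}: a sequence $C_n \in \mathcal{M}(X, \{J_s'\}, e)$ with $|\mathfrak{f}(C_n)| < z$ must contain a point $p_n \in C_n$ with $|t(p_n)| < z$, the diameter bound confines each $C_n$ to a fixed compact region, and Gromov compactness, using $\pi_2(X) = 0$ and the primitivity of $e$ to rule out multiple covers and sphere bubbles, produces a subsequence converging to another element of the moduli space. The main obstacle is the equivariant transversality step: one must verify that restricting perturbations to those antiequivariant on $V$ still leaves enough freedom to kill cokernels of the linearized operator at every curve. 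This is precisely where the identity $\sigma_*(e) = -e$ is essential, for it guarantees that no curve in the moduli space is fixed by $\sigma$, so the symmetry constraint on $V$ never imposes a nontrivial linear relation on perturbations localized near the curve.
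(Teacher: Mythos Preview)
Your overall architecture matches the paper's: construct an initial family with the antiequivariance on $V$, then perturb equivariantly to achieve transversality, and finally invoke the properness argument of Lemma~\ref{lem: regularity of moduli space for the noncompact case}. The paper packages the middle step as Lemma~\ref{lem: regularity and admissibility for family}, which rests on the notion of $(d,E)$-regularity. Your direct argument, however, has a genuine gap.

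The claim ``$\sigma_*(e)=-e$, hence $\sigma(C)\neq C$'' does not give what you need. It shows $\sigma(C)\neq C$ as \emph{oriented} submanifolds, but not as point sets. Since $\sigma^*\Omega_s=-\Omega_s$, the map $\sigma$ could act on $C$ as a free orientation-reversing involution of the torus; then $\sigma(C)=C$ as a set while $\sigma_*[C]=-[C]$, perfectly consistent with $\sigma_*(e)=-e$. In that situation there is no point $p\in C$ with $\sigma(p)\notin C$, and your perturbation scheme breaks down. The homological argument is therefore insufficient.

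What actually makes the equivariant transversality work here is the specific geometry of $V$, which you never invoke. The set $V=\widetilde{Y}\times\big((-\infty,-T_0)\cup(T_0,\infty)\big)$ has two components exchanged by $\sigma$, and $T_0$ was chosen so that their distance exceeds the diameter bound $M(N,E)$. Consequently any curve $C$ of energy $\le E$ meets at most one component of $V$; if $p\in C\cap V$ then $\sigma(p)$ lies in the other component and hence $\sigma(p)\notin C$. For points $p\in C\setminus V$ there is no equivariance constraint at all. This is exactly what the paper's $(d,E)$-regularity condition (with $B=\overline{V}$) encodes, and it is the real reason the symmetry never obstructs the cokernel-killing perturbation.

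A smaller issue: your ``antisymmetrization'' $\tilde J_s\mapsto(\tilde J_s-(-\sigma^*\tilde J_s))/2$ does not land in $\mathcal{J}(X,\Omega_s)$, since compatible almost complex structures do not form an affine space. Here again the two-component structure of $V$ rescues you: simply keep $\tilde J_s$ on $\{t>T_0\}$ and replace it by $-\sigma^*\tilde J_s$ on $\{t<-T_0\}$, then interpolate near $\{|t|=T_0\}$.
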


Let $\{J_s'\}$ be the family of almost complex structures given by the lemmas above, let $t_0>0$ be sufficiently large such that every $J_s'$-holomorphic torus $C$ in the homology class $e$ with $|\mathfrak{f}(C)|>t_0$ is contained in $V$. Take a constant $t_1>t_0$ such that $t_1$ and $-t_1$ are regular values of $\mathfrak{f}$, and that $t_1\notin\mathfrak{f}\big(\mathcal{M}(X,J_0',e) \cup \mathcal{M}(X,J_1',e)\big)$. Let $S_i=\mathcal{M}(X,J_i',e)\cap \mathfrak{f}^{-1}([-t_1,t_1])$. The set $\mathfrak{f}^{-1}(t_1)\cup\mathfrak{f}^{-1}(-t_1)\cup S_0\cup S_1$ is the boundary of the compact 1-manifold $\mathfrak{f}^{-1}([-t_1,t_1])$, hence it has an even number of elements. On the other hand, the properties of $\{J_s'\}$ given by lemma \ref{lem: Z2 admissibility of a family for taut foliation} shows that $\sigma$ maps $\mathfrak{f}^{-1}(t_1)$ to $\mathfrak{f}^{-1}(-t_1)$, therefore the set $\mathfrak{f}^{-1}(t_1)\cup\mathfrak{f}^{-1}(-t_1)$ has an even number of elements. The properties given by lemma \ref{lem: Z2 admissibility of Ji} implies that $\sigma$ acts on the set $S_i$, and the fixed point set of this action consists of the $J_i'$-holomorphic tori in $\widetilde{Y}\times \{0\}$ which are lifts of Klein-bottle leaves. Let $\mathcal{K}_i$ be the set of lifts of Klein-bottle leaves in $\mathcal{L}_i$ in the class $A$, then the arguments above shows that the number of elements in $\mathfrak{f}^{-1}(t_1)\cup\mathfrak{f}^{-1}(-t_1)\cup S_0\cup S_1$ has the same parity as the number of elements in $\mathcal{K}_0\cup \mathcal{K}_1$. Therefore, the set $\mathcal{K}_0\cup \mathcal{K}_1$ has an even number of elements, and the desired result is proved.
\end{proof}

\section{Technical lemmas} \label{sec: technical lemmas}

The purpose of this section is to prove lemma \ref{lem: Z2 admissibility of Ji} and lemma \ref{lem: Z2 admissibility of a family for taut foliation}. The proofs are routine and straightforward, they are given here for lack of a direct reference. Throughout this section $X$ will be a smooth 4-manifold with $\pi_2(X)=0$.

\begin{Definition}
Let $(X,\omega)$ be a symplectic manifold. Let $B\subset X$ be a closed subset. Let $E,N>0$ be constants. An almost complex structure $J\in \mathcal{J}(X,\omega,N)$ is called $(B,E)$-admissible if the following conditions hold:
\begin{enumerate}
\item Every embedded curve $C$ with energy less than or equal to $E$ and $d([C])=0$, and satisfies $C\cap B\neq\emptyset$ is nondegenerate.
\item For every homology class $e\in H_2(X;\mathbb{Z})$, if $\langle [\omega], e\rangle \le E$, and if $\langle c_1(T^{0,1}X), e\rangle >0$ (namely, the formal dimension of the moduli space of $J$-holomorphic maps from a torus to $X$ in the homology class $e$, modulo self-isomorphisms of the domain, is negative), then there is no somewhere injective $J$-holomorphic map $\rho$ from a torus to $X$ in the homology class $e$ such that $\textrm{Im}(\rho)\cap B\neq \emptyset$.
\end{enumerate}
\end{Definition}

The next lemma follows immediately from Gromov's compactness theorem and the diameter bound of lemma \ref{lem: bound on diameters of J curve}.
\begin{Lemma}
Let $(X,\omega)$ be a symplectic manifold. Let $B\subset X$ be a closed subset, and $E,N>0$ be constants. The elements of $\mathcal{J}(X,\omega,N)$ that are $(B,E)$-admissible form an open subset of $\mathcal{J}(X,\omega,N)$.\qed
\end{Lemma}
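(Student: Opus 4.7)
My plan is to argue by contradiction, using the box-topology structure of $\mathcal{J}(X,\omega,N)$ together with the diameter bound from lemma \ref{lem: bound on diameters of J curve} and Gromov's compactness theorem. Suppose $J\in\mathcal{J}(X,\omega,N)$ is $(B,E)$-admissible. By definition of the box topology, it suffices to produce, for each compact piece $A_i$ of the locally finite cover, a $C^\infty$-open neighborhood $U_i$ of $J|_{A_i}$ in $\mathcal{J}(A_i,\omega)$ such that any $J'$ with $J'|_{A_j}\in U_j$ for every $j$ remains $(B,E)$-admissible. By lemma \ref{lem: bound on diameters of J curve}, any violating curve $C$ has diameter at most $M(N,E)$, and since $C$ meets $B$, such a $C$ must lie inside the $M(N,E)$-neighborhood $K_i$ of some $A_i$; by local finiteness, $K_i$ meets only finitely many $A_j$. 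The openness question therefore reduces to the local statement: for each $i$, one needs a $C^\infty$-open neighborhood $V_i$ of $J|_{K_i}$ in $\mathcal{J}(K_i,\omega)$ such that no $J'$ with $J'|_{K_i}\in V_i$ admits a violating curve meeting $A_i\cap B$.

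For this local statement I argue by contradiction. If no such $V_i$ existed, there would be a sequence $J'_n\to J$ in $C^\infty(K_i)$ together with violating curves $C_n\subset K_i$ meeting $A_i\cap B$. After passing to a subsequence to stabilize the homology class (possible because $K_i$ is compact and $\omega$-energy is bounded by $E$), Gromov's compactness theorem—combined with uniform bounded geometry (constant $N$), uniform energy bound, and the absence of sphere bubbles from $\pi_2(X)=0$—produces a $J$-holomorphic stable limit $C_\infty$ in $K_i$ still meeting $A_i\cap B$. If the violation is of condition (1) (with $C_n$ embedded, $d([C_n])=0$, and $C_n$ degenerate), the adjunction-formula argument from lemma \ref{lem: regularity of moduli space for the compact case} forces $C_\infty$ to be an embedded $J$-holomorphic curve in the same class, hence nondegenerate by admissibility of $J$; the implicit function theorem applied to the linearized operator $L$ of \eqref{eqn: deformation operator} then shows the $C_n$ are nondegenerate for large $n$, contradicting the assumption. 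If the violation is of condition (2) (somewhere injective $J'_n$-torus in a class with $\langle c_1(T^{0,1}X),e\rangle>0$ and $\langle[\omega],e\rangle\le E$), one extracts from $C_\infty$ a somewhere injective $J$-holomorphic torus in a class satisfying $\langle c_1,\cdot\rangle>0$, $\langle[\omega],\cdot\rangle\le E$, meeting $B$, again contradicting admissibility of $J$.

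The main obstacle is the book-keeping for condition (2): tracking how somewhere-injectivity and the quantities $\langle c_1(T^{0,1}X),\cdot\rangle$ and $\langle[\omega],\cdot\rangle$ transfer from $C_n$ to a simple component of the Gromov limit when multi-covers or nodal degenerations occur. Since the paper describes this lemma as an immediate consequence of lemma \ref{lem: bound on diameters of J curve} and Gromov compactness, these standard Gromov-compactness refinements are assembled without further elaboration.
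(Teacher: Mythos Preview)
Your approach is correct and is exactly what the paper has in mind: the paper gives no argument beyond the sentence that the lemma ``follows immediately from Gromov's compactness theorem and the diameter bound of lemma~\ref{lem: bound on diameters of J curve}'', and you have fleshed out precisely that sketch---localizing via the box topology and the diameter bound, then running Gromov compactness with $\pi_2(X)=0$ on a compact piece. The one imprecision worth noting is that the adjunction argument you cite from lemma~\ref{lem: regularity of moduli space for the compact case} was carried out there only for tori in a primitive class with $e\cdot e=\langle c_1,e\rangle=0$, whereas condition~(1) of $(B,E)$-admissibility concerns embedded curves of arbitrary genus with $d([C])=0$; in that generality the Gromov limit can a priori be nodal or multiply covered, so forcing $C_\infty$ to be smooth and embedded (and then propagating nondegeneracy back to $C_n$) needs slightly more bookkeeping than you indicate---but the paper supplies none of this either, so there is no discrepancy.
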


From now on assume that $\sigma:X\to X$ is a map that acts diffeomorphically on $X$, such that $\sigma^2=\textrm{id}_X$ and the quotient map $X\to X/\sigma$ is a covering map. 

\begin{Definition}
Let $(X,\omega)$ be a symplectic manifold. Let $d,E,N>0$ be constants. Let $B$ be a closed subset of $X$ such that $\sigma(B)=B$. An almost complex structure $J\in\mathcal{J}(X,\omega,N)$ is called $(d,E)$-regular with respect to $B$ if for every $J$-holomorphic map $\rho$ from a torus to $X$ with topological energy less than or equal to $E$, at least one of the following conditions hold:
\begin{enumerate}
\item The distance between the sets $\textrm{Im}(\rho)$ and $\sigma(\textrm{Im}(\rho))$ is greater than $d$. 
\item The distance of $\textrm{Im}(\rho)$ and $B$ is greater than $d$.
\end{enumerate}
Here the distance is defined by the metric $g_J=\omega(\cdot,J\cdot)$ on $X$.
\end{Definition}

Notice that since the map $\rho$ in the definition above can be a constant map, for a $(d,E)$-regular almost complex structure $J$ with respect to $B$, one has $\textrm{dist}(p,\sigma(p))>d$ for every $p\in B$. 

The following result is also a corollary of Gromov's compactness theorem.

\begin{Lemma} \label{lem: openness of regularity}
Let $d,E,N>0$ be constants, and $B$ is a closed subset of $X$ such that $\sigma(B)=B$. The elements of $\mathcal{J}(X,\omega,N)$ that are $(d,E)$-regular with respect to $B$ form an open subset of $\mathcal{J}(X,\omega,N)$.
\end{Lemma}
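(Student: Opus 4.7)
My approach is by contradiction via Gromov's compactness theorem, applied locally on compact pieces of $X$ and then stitched into a box-open neighborhood of $J$ using the local finiteness of the defining cover $\{A_i\}$. Fix $J\in\mathcal{J}(X,\omega,N)$ that is $(d,E)$-regular with respect to $B$. Let $M=M(N,E)$ be the diameter bound from Lemma \ref{lem: bound on diameters of J curve} and, for each $i$, let $A_i'$ be the closed $(M+d+1)$-neighborhood of $A_i$ in the complete bounded-geometry metric $g_J$; by Hopf--Rinow, $A_i'$ is compact.

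The key local claim is that for every $i$ there exists a $C^\infty$-open neighborhood $W_i$ of $J|_{A_i'}$ in $\mathcal{J}(A_i',\omega)$ such that no $J'\in\mathcal{J}(X,\omega,N)$ with $J'|_{A_i'}\in W_i$ admits a $J'$-holomorphic torus $\rho:T^2\to X$ of energy $\le E$ whose image meets $A_i$ and satisfies both $d_{g_{J'}}(\Ima(\rho),\sigma(\Ima(\rho)))\le d$ and $d_{g_{J'}}(\Ima(\rho),B)\le d$. To prove the claim by contradiction, a hypothetical sequence of counterexamples $(J_n,\rho_n)$ has $\Ima(\rho_n)$, its $\sigma$-image, and the nearest points in $B$ all confined to the compact set $A_i'$, by the diameter bound together with the distance hypotheses. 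Gromov's compactness theorem then extracts a subsequence converging to a stable $J$-holomorphic map, and $\pi_2(X)=0$ rules out sphere bubbles. A short genus count shows that a connected stable map of arithmetic genus $1$ with no sphere components is a single (possibly multiply covered) $J$-holomorphic torus $\rho_\infty$ of energy $\le E$, whose image inherits both distance inequalities in $g_J$, contradicting the $(d,E)$-regularity of $J$.

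With the $W_i$'s in hand I would assemble a box-open neighborhood $V=\prod_j V_j\ni J$ inside the regular set. By local finiteness of $\{A_i\}$, each $A_j$ lies in only finitely many $A_i'$. For each $j$ I choose $V_j\subset\mathcal{J}(A_j,\omega)$ small enough that, for every $i$ with $A_j\subseteq A_i'$, the joint restriction conditions $J'|_{A_{j'}}\in V_{j'}$ (over all $j'$ with $A_{j'}\subseteq A_i'$) force $J'|_{A_i'}\in W_i$; only finitely many such $i$ appear for each $j$, so $V_j$ is a finite intersection of open conditions and hence open. Any $J'\in V$ is then $(d,E)$-regular, since any hypothetical bad torus for $J'$ would have its image meeting some $A_i$ and would therefore be forbidden by $W_i$.

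The main obstacle is the local Gromov step, specifically identifying the Gromov limit as a single $J$-holomorphic torus. For this I would rely on $\pi_2(X)=0$ to exclude sphere components and on the elementary fact that a connected nodal Riemann surface of arithmetic genus $1$ with no sphere components must be a smooth torus. The diameter bound of Lemma \ref{lem: bound on diameters of J curve} is essential for confining all relevant geometric data to the compact set $A_i'$, which is what permits uniform extraction of a Gromov-convergent subsequence in the first place.
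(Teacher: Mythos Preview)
Your approach is correct and is essentially the paper's proof: a local Gromov compactness argument on compact pieces, assembled into a box-open neighborhood. The paper organizes slightly differently, covering $B$ by a locally finite family of $\sigma$-invariant compact sets $B_n$, enlarging each to $A_n$ (the closed $(M(N,E)+d+2)$-neighborhood), and then invoking the box-topology openness directly; you instead work with the topology-defining cover $\{A_i\}$ of $X$ and enlarge to $A_i'$. Both work, and your organization is arguably cleaner since it avoids building $\sigma$-invariant pieces.

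Two small points to clean up. First, in your assembly step the condition ``$A_j\subseteq A_i'$'' should be ``$A_j\cap A_i'\neq\emptyset$'': the $A_j$'s contained in $A_i'$ need not cover $A_i'$, whereas those meeting $A_i'$ do, and local finiteness still gives only finitely many such $i$ for each $j$ (since $A_j\cap A_i'\neq\emptyset$ forces $A_i$ to meet the compact $(M+d+1)$-neighborhood of $A_j$). Second, you do not need all of $\sigma(\Ima(\rho_n))$ inside $A_i'$; only the witness point $q_n\in\sigma(\Ima(\rho_n))$ realizing the distance $\le d$ is needed, and that lies in $A_i'$ automatically. Its preimage $\sigma^{-1}(q_n)\in\Ima(\rho_n)\subset A_i'$ also subconverges, and continuity of $\sigma$ then gives $q_\infty\in\sigma(\Ima(\rho_\infty))$. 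With these tweaks your argument goes through exactly as written.
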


\begin{proof}
First consider the case when $B$ is compact. Let $M(N,E)$ be the upper bound of diameter given by lemma \ref{lem: bound on diameters of J curve}. Let $A$ be a compact set containing $B$ such that the distance between $\partial A$ and $B$ is greater than $M(N,E)+d+2$. Suppose $J$ is a $(d,E)$-regular almost complex structure with respect to $B$. Let $\mathcal{U}$ be a sufficiently small open neighborhood of $J|_A\in \mathcal{J}(A,\omega)$, such that for every $J'\in\mathcal{J}(X,\omega,N)$, if $J'|_A\in \mathcal{U}$ then the distance between $\partial A$ and $B$ is greater than $M(N,E)+d+1$. One claims that there is a smaller neighborhood $\mathcal{V}\subset \mathcal{U}$ containing $J$, such that for every $J'\in\mathcal{J}(X,\omega,N)$, if $J'|_A\in \mathcal{V}$ then $J'$ is $(d,E)$-regular with respect to $B$. In fact, assume the claim is not true, since $\mathcal{J}(A,\omega)$ is first countable, there is a sequence $\{J_n\}\subset \mathcal{J}(X,\omega,N)$, such that $J_n|_A\to J|_A$ in the $C^{\infty}$ topology, and that every $J_n$ is not $(d,E)$-regular with respect to $B$. By the definition of $(d,E)$-regularity, there is a sequence of $J_n$-holomorphic maps $\rho_n$ from torus to $X$ with topological energy less than or equal to $E$, such that the distance of $\textrm{Im}(\rho)$ to $B$ with respect to the metric given by $J_n$ is less than or equal to $d$, and the distance between $\textrm{Im}(\rho)$ and $\sigma(\textrm{Im}(\rho))$ with respect to the metric given by $J_n$ is less than or equal to $d$. By the diameter bound, every curve $C_n$ is contained in the set $A$. Gromov's compactness theorem then implies that there is a subsequence of $\rho_n$ such that at least part of the map converges to a non-constant $J$-holomorphic map. Since is it assumed that $\pi_2(X)=0$, the domain of the limit map is a torus. The limit map has topological energy less than or equal to $E$, and it violates the assumption that $J$ is $(d,E)$-regular with respect to $B$.

Now consider the case when $B$ is not necessarily compact. Let $J$ be a $(d,E)$-regular almost complex structure with respect to $B$. Cover $B$ by a locally finite family of compact subsets $B_n$ such that $\sigma(B_n)=B_n$ for each $n$. Let $A_n$ be the closed $(M(N,E)+d+2)$-neighborhood of $B_n$. By the argument of the previous paragraph, for each $n$ there is an open neighborhood $\mathcal{V}_n$ of $J|_{A_n}$ in $\mathcal{J}(A_n,\omega)$, such that for every $J'\in\mathcal{J}(X,\omega,N)$, if $J'|_{A_n}\in \mathcal{V}_n$ then $J'$ is $(d,E)$-regular with respect to $B_n$. Notice that $J'$ is $(d,E)$-regular with respect to $B$ if and only if it is $(d,E)$-regular with respect to every $B_n$. The result of the lemma then follows from part 1 of lemma \ref{lem: diagonal argument}.
\end{proof}

The following lemma is a 1-parametrized version of lemma \ref{lem: openness of regularity}.
\begin{Lemma} \label{lem: openness of regularity for family}
Let $d,E,N>0$ be constants, and $B$ is a closed subset of $X$ such that $\sigma(B)=B$. Let $\omega_s$ ($s\in[0,1]$) be a smooth family of symplectic forms on $X$, and let $J_i\in \mathcal{J}(X,\omega_i,N)$. Then the set of elements $\{J_s\}\in\mathcal{J}(X,\{\omega_s\},J_0,J_1,N)$ such that every $J_s$ is $(d,E)$-regular with respect to $B$ form an open subset of $\mathcal{J}(X,\{\omega_s\},J_0,J_1,N)$.
\end{Lemma}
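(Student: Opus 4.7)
The plan is to transcribe the proof of Lemma \ref{lem: openness of regularity} to the one-parameter setting, using compactness of $[0,1]$ to pass from a hypothetical counterexample family to an offending limit curve at a single parameter value. First I would reduce to the case where $B$ is compact by covering $B$ with a locally finite family of compact subsets $B_n$ satisfying $\sigma(B_n)=B_n$. Since $\{J_s\}$ is $(d,E)$-regular with respect to $B$ if and only if it is $(d,E)$-regular with respect to each $B_n$, the general case then follows from the compact case by part 2 of Lemma \ref{lem: diagonal argument}, exactly as in the last paragraph of the proof of Lemma \ref{lem: openness of regularity}.

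So assume $B$ is compact. Let $M=M(N,E)$ be the diameter bound from Lemma \ref{lem: bound on diameters of J curve}; because every family in $\mathcal{J}(X,\{\omega_s\},J_0,J_1,N)$ has the same bounding constant $N$, this $M$ bounds the diameter of every $J_s$-holomorphic curve of energy at most $E$ uniformly in $s\in[0,1]$ and in the family. Fix a $(d,E)$-regular $\{J_s\}$ and choose a compact $A\supset B$ whose distance to $B$ exceeds $M+d+2$ with respect to $g_{J_s}$ for every $s\in[0,1]$; the compactness of $[0,1]$ and the uniform metric bounds allow the same $A$ to continue to enjoy this distance property for any family $\{J_s'\}$ sufficiently close to $\{J_s\}$ on $A$.

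Now suppose towards a contradiction that there is a sequence $\{J_s^{(k)}\}\to\{J_s\}$ in the topology of $\mathcal{J}(X,\{\omega_s\},J_0,J_1,N)$ with each $\{J_s^{(k)}\}$ failing to be $(d,E)$-regular with respect to $B$. Then there exist $s_k\in[0,1]$ and $J_{s_k}^{(k)}$-holomorphic tori $\rho_k$ of energy at most $E$ with both $\mathrm{dist}(\mathrm{Im}(\rho_k),\sigma(\mathrm{Im}(\rho_k)))\le d$ and $\mathrm{dist}(\mathrm{Im}(\rho_k),B)\le d$. After extraction, $s_k\to s_*\in[0,1]$; the diameter bound confines every $\mathrm{Im}(\rho_k)$ to $A$, and since convergence in $\mathcal{J}(A,\{\omega_s\},J_0,J_1)$ is $C^\infty$ jointly in $(s,x)$, we get $J_{s_k}^{(k)}\to J_{s_*}$ in $C^\infty$ on $A$. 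Because $\pi_2(X)=0$, Gromov compactness applied to $\rho_k$ produces a non-constant $J_{s_*}$-holomorphic torus of energy at most $E$ whose image still satisfies both distance inequalities, contradicting the $(d,E)$-regularity of $J_{s_*}$.

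The only real obstacle is a topological bookkeeping point: one must check that convergence in the box-type topology on $\mathcal{J}(X,\{\omega_s\},J_0,J_1,N)$ implies the joint smooth convergence in $(s,x)$ on each compact $A_n$ needed to pass $J_{s_k}^{(k)}\to J_{s_*}$ through Gromov compactness. Once this is in hand, every other step is a direct translation of the argument in Lemma \ref{lem: openness of regularity} for a single almost complex structure, with the extra subsequence extraction $s_k\to s_*$ being the only new ingredient.
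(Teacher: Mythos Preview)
Your proposal is correct and follows essentially the same approach as the paper. The paper's own proof is a two-line remark that the argument of Lemma~\ref{lem: openness of regularity} goes through verbatim after replacing $J$ by $\{J_s\}$ and $\mathcal{J}(X,\omega,N)$ by $\mathcal{J}(X,\{\omega_s\},J_0,J_1,N)$; your write-up simply carries out that transcription explicitly, with the extraction $s_k\to s_*$ being the only genuinely new step, and the bookkeeping point you flag (that restriction to a compact $A$ lands you in a first-countable space where the sequential contradiction argument is legitimate) is exactly what makes the verbatim translation valid.
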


\begin{proof}
The proof is exactly the same as lemma \ref{lem: openness of regularity}. One only needs to change the notation $J$ to $\{J_s\}$, and change the notation $\mathcal{J}(X,\omega,N)$ to $\mathcal{J}(X,\{\omega_s\},J_0,J_1,N)$.
\end{proof}

\begin{Lemma} \label{lem: Z2 admissibility}
Let $(X,\omega)$ be a symplectic manifold such that $\sigma^*(\omega)=-\omega$. Let $d,E,N>0$ be constants. Let $B$ be a closed subset of $X$ such that $\sigma(B)=B$. Assume $J\in\mathcal{J}(X,\omega,N)$ is $(d,E)$-regular with respect to $B$, and assume that $\sigma^*(J)=-J$ on $B$. Then for every open neighborhood $\mathcal{U}$ of $J$ in $\mathcal{J}(X,\omega,N)$, there is an element $J'$ such that $J'$ is $(d,E)$-regular with respect to $B$ and is $E$-admissible, and $\sigma^*(J')=-J'$ on $B$. Moreover, if there is a closed subset $H\subset X$ such that $\sigma(H)=H$ and $J$ is $(H,E)$-admissible, then $J'$ can be taken to be equal to $J$ on the set $H$.
\end{Lemma}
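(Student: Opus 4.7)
The plan is to run the standard Sard--Smale transversality argument for $E$-admissibility, subject to the two linear constraints that the infinitesimal perturbation $Y = \delta J$ must satisfy $\sigma^{*}Y = -Y$ on $B$ (so as to preserve $\sigma^{*}J' = -J'$ on $B$) and, under the secondary hypothesis, $Y = 0$ on $H$. The $(d,E)$-regularity assumption is precisely the geometric input that makes the constrained perturbation space still rich enough to achieve transversality at each relevant $J'$-holomorphic curve.

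First I would shrink $\mathcal{U}$ to an open neighborhood of $J$ whose elements are all $(d,E)$-regular with respect to $B$ (possible by Lemma \ref{lem: openness of regularity}) and all $(H,E)$-admissible when that hypothesis applies; the latter openness follows from Gromov compactness together with the diameter bound of Lemma \ref{lem: bound on diameters of J curve}. From this point on I work inside this shrunken $\mathcal{U}$. Next I would fix a countable locally finite cover $\{A_n\}$ of $X$ by compact sets of diameter smaller than $d/10$, and let $A_n'$ denote the closed $(M(N,E)+1)$-neighborhood of $A_n$, where $M(N,E)$ is the curve diameter bound; the family $\{A_n'\}$ is again locally finite, and every $J'$-holomorphic curve of energy at most $E$ whose image meets $A_n$ is contained in $A_n'$.

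For each $n$ I would produce an open dense subset $\mathcal{V}_n \subset \mathcal{J}(A_n,\omega)$ such that whenever $J' \in \mathcal{U}$ satisfies $J'|_{A_n} \in \mathcal{V}_n$, the $E$-admissibility conditions of Definition \ref{def: admissible} hold for every relevant curve meeting $A_n$. Classical universal moduli space techniques produce such a $\mathcal{V}_n$ when perturbations of $J$ are unrestricted. The substantive new point is that admissible perturbations $Y$ here must respect $\sigma^{*}Y = -Y$ on $B$ and $Y = 0$ on $H$, and I must check that this constrained perturbation map still surjects onto the cokernel of the linearized $\bar\partial$-operator $L_C$ at each relevant curve $C$ meeting $A_n$. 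This surjectivity verification is the main technical obstacle of the proof, and is where $(d,E)$-regularity enters decisively. If $\textrm{Im}(C) \cap B = \emptyset$, one chooses $Y$ supported away from $B$, rendering the $B$-constraint vacuous, and the classical argument applies verbatim. If $\textrm{Im}(C) \cap B \neq \emptyset$, then $(d,E)$-regularity forces $\textrm{dist}(\textrm{Im}(C),\sigma(\textrm{Im}(C))) > d$; then for an injective point $p \in \textrm{Im}(C)$ one takes $Y$ supported in a small ball $U$ around $p$ of radius less than $d/4$, and the mirror perturbation on $\sigma(U)$ required by $\sigma^{*}Y = -Y$ on $B$ then sits at distance at least $d/2$ from $\textrm{Im}(C)$, hence does not enter the linearized equation on $C$. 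Therefore the image of the constrained perturbation map into $\mathrm{coker}\,L_C$ coincides with the image of the unconstrained map, and surjectivity persists. The $H$-constraint is handled in parallel: perturbation supports are kept off $H$, which is permissible because $(H,E)$-admissibility of $J$ already guarantees the required behaviour near $H$.

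Finally I would assemble the local sets $\mathcal{V}_n$ into a global open dense subset of $\mathcal{J}(X,\omega,N)$ via part 1 of Lemma \ref{lem: diagonal argument}; any point in its intersection with the open nonempty set $\mathcal{U}$ gives the required $J'$.
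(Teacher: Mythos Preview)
Your key geometric observation---that $(d,E)$-regularity forces $\operatorname{dist}(\operatorname{Im}(C),\sigma(\operatorname{Im}(C)))>d$ for any curve $C$ meeting $B$, so that a perturbation supported in a ball of radius $<d/4$ and its forced mirror on the $\sigma$-image never both touch $C$---is exactly the heart of the paper's proof. The paper, however, does not package this as Sard--Smale plus Lemma~\ref{lem: diagonal argument}; it runs an explicit induction, covering $X$ by small balls $B_j$ with open $d/10$-neighborhoods $D_j$, and at step $j+1$ perturbs freely on $D_{j+1}$ to achieve $(B_{j+1},E)$-admissibility, then applies the mirror perturbation on $\sigma(D_{j+1})$ to restore $\sigma^*J=-J$ on $B$. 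Your surjectivity paragraph is precisely the justification that this mirror correction does not destroy the admissibility just gained.

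There is a genuine gap in your final assembly step. You produce open dense sets $\mathcal{V}_n\subset\mathcal{J}(A_n,\omega)$ and invoke Lemma~\ref{lem: diagonal argument} to obtain an open dense subset of the \emph{unconstrained} space $\mathcal{J}(X,\omega,N)$, then intersect with $\mathcal{U}$. But nothing in that step guarantees the resulting $J'$ satisfies $\sigma^*J'=-J'$ on $B$: the constraint couples $J'|_{A_n}$ with $J'|_{\sigma(A_n)}$, so the constrained subspace does not factor through the product $\prod_n\mathcal{J}(A_n,\omega)$ and Lemma~\ref{lem: diagonal argument} as stated does not apply to it. Your surjectivity argument shows that constrained perturbations suffice for transversality, but this is a statement about the universal moduli space over the constrained parameter space, not about meeting the $\mathcal{V}_n$ you built from the unconstrained argument. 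The repair is to carry out the diagonal/inductive construction \emph{inside} the constrained space---perturb on $D_{j+1}$, then mirror on $\sigma(D_{j+1})$, check the mirror does not undo admissibility (your surjectivity discussion), and pass to the limit---which is exactly the paper's route.
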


\begin{proof}
By shrinking the open neighborhood $\mathcal{U}$, one can assume that every element of $\mathcal{U}$ is $(d,E)$-regular with respect to $B$, and that there is a complete metric $g_0$ on $X$ such that $g_0\ge g_{J'}$ for every $J'\in\mathcal{U}$. For the rest of this proof, the distance function on $X$ is defined by $g_0$.

Cover $X$ by a locally finite family of closed balls with radius $d/10$. Say 
$$X = \bigcup_{i=1}^{N} B_i, $$
where $\{B_i\}$ are closed balls with radius $d/10$. Let $D_i$ be the open $d/10$-neighborhood of $B_i$.
Let $A_j=\cup_{i\le j} B_j$, where $A_0=\emptyset$. The construction of $J'$ follows from induction. Assume that $J_j$ is already $(A_j,E)$-admissible with $\sigma^*(J_j)=-J_j$ on $B$, the following paragraph will perturb $J_j$ to $J_{j+1}$ such that $J_{j+1}$ is $(A_{j+1},E)$-admissible with $\sigma^*(J_{j+1})=-J_{j+1}$ on $B$.

In fact, if $D_{j+1}\cap B=\emptyset$, then a generic perturbation on $D_{j+1}$ will do the job. If $D_{j+1}\cap B\neq\emptyset$, make a small perturbation on $D_{j+1}$ such that the resulting almost complex structure $J_{j+1}'$ is $(B_{j+1},E)$-admissible. Now make a corresponding perturbation on $\sigma(D_{j+1})$ such that the resulting almost complex structure $J_{j+1}$ satisfies $\sigma(J_{j+1})=-J_{j+1}$ on $B$. Since every element in $\mathcal{U}$ is $(d,E)$-regular with respect to $B$, there is no $J_{j+1}$-holomorphic map with topological energy less than or equal to $E$ and with image passing through both $D_{j+1}$ and $\sigma(D_{j+1})$, therefore $J_{j+1}'$ being $(D_{j+1},E)$-admissible implies that $J_{j+1}$ is $(D_{j+1},E)$-admissible. Since being $(A_j,E)$-admissible is an open condition, when the perturbation is sufficiently small the almost complex structure $J_{j+1}$ is also $(A_j,E)$-admissible. Therefore $J_{j+1}$ is $(A_{j+1},E)$-admissible. Since the family $\{D_n\}$ is locally finite, on each compact set the sequence $\{J_j\}$ stabilizes for sufficiently large $j$. The desired $J'$ can then be taken to be $\lim_{j \to \infty} J_j$. Moreover, if there is a closed subset $H\subset X$ such that $\sigma(H)=H$ and $J$ is $(H,E)$-admissible, then each step of the perturbation can be taken to be outside of $H$.\end{proof}

The following lemma is a 1-parametrized version of lemma \ref{lem: Z2 admissibility}, and the proof is essentially the same.

\begin{Lemma} \label{lem: Z2 admissibility for family}
Let $e\in H_2(X;\mathbb{Z})$ be a primitive class. Let $B$ be a closed subset of $X$ such that $\sigma(B)=B$. Assume $\omega_s$ $(s\in[0,1])$ is a smooth family of symplectic forms on $X$ such that $\sigma^*(\omega_s)=-\omega_s$ for each $s$. Let $d, N>0$ be constants. Let $E$ be a positive constant such that $E>\langle[\omega_s],e\rangle$ for every $s$. For $i=0,1$, assume $J_i\in \mathcal{J}(X,\omega_i,N)$ is $E$-admissible and $(d,E)$-regular with respect to $B$.  Assume $\{J_s\}\in\mathcal{J}(X,\{\omega_s\},J_0,J_1,N)$, such that for each $s$, the almost complex structure $J_s$ is $(d,E)$-regular with respect to $B$, and $\sigma^*(J_s)=-J_s$ on $B$. Then for every open neighborhood $\mathcal{U}$ of $\{J_s\}$ in $\mathcal{J}(X,\{\omega_s\},J_0,J_1,N)$, there is an element $\{J_s'\}$ such that $\{J_s'\}$ is $(d,E)$-regular with respect to $B$ and is $E$-admissible, and $\sigma^*(J_s')=-J_s'$ on $B$ for every $s$. Moreover, if there is a closed subset $H\subset X$ such that $\sigma(H)=H$ and $\{J_s\}$ is $(H,E)$-admissible, then $J_s'$ can be taken to be equal to $J_s$ on the set $H$.
\end{Lemma}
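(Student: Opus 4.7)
The plan is to run the same inductive covering argument as in Lemma \ref{lem: Z2 admissibility}, working in $\mathcal{J}(X,\{\omega_s\},J_0,J_1,N)$ throughout and replacing $J$ by the family $\{J_s\}$. First, using Lemma \ref{lem: openness of regularity for family}, I would shrink $\mathcal{U}$ so that every family in $\mathcal{U}$ is $(d,E)$-regular with respect to $B$, and so that there is a single complete reference metric $g_0$ on $X$ with $g_0\ge g_{J_s'}$ for every family $\{J_s'\}\in\mathcal{U}$ and every $s$. Using $g_0$, I would cover $X$ by a locally finite family of closed balls $\{B_i\}$ of radius $d/10$, with enlarged open balls $D_i$ of radius $d/5$, and set $A_j=\bigcup_{i\le j}B_i$.

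The family $\{J_s'\}$ is then built by induction on $j$. Assume that at stage $j$ one has a family $\{J_s^{(j)}\}\in\mathcal{U}$ with $\sigma^*(J_s^{(j)})=-J_s^{(j)}$ on $B$, with $J_s^{(j)}=J_s$ on $H$ when $H$ is given, with $J_0^{(j)}=J_0$ and $J_1^{(j)}=J_1$, and such that the transversality conditions required by Definition \ref{def: admissible} hold for every somewhere injective $J_s^{(j)}$-holomorphic torus in class with energy at most $E$ whose image meets $A_j$. To pass to stage $j+1$, perturb $J_s^{(j)}$ on $D_{j+1}$, parametrized by $s$ and cut off to be supported away from $s=0,1$ (this is legitimate since $E$-admissibility is an open condition at the endpoints, which are already $E$-admissible). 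A standard parametrized Sard–Smale argument applied to the universal moduli space of somewhere injective $J_s$-holomorphic tori in class $e$ passing through $B_{j+1}$ produces, on a residual set of such perturbations, $(B_{j+1},E)$-admissibility of the resulting family. If $D_{j+1}\cap B\ne\emptyset$, I then apply the pullback of this perturbation via $\sigma$ on $\sigma(D_{j+1})$ to restore $\sigma^*(J_s^{(j+1)})=-J_s^{(j+1)}$ on $B$; if $D_{j+1}\cap B=\emptyset$ no symmetrization is needed.

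The crucial observation, exactly as in Lemma \ref{lem: Z2 admissibility}, is that $(d,E)$-regularity prevents any $J_s^{(j)}$-holomorphic torus of energy at most $E$ from meeting both $D_{j+1}$ and $\sigma(D_{j+1})$, and by Lemma \ref{lem: openness of regularity for family} this remains true after a sufficiently small perturbation. Hence the symmetrizing perturbation on $\sigma(D_{j+1})$ cannot reintroduce obstructions on $D_{j+1}$, and the $(B_{j+1},E)$-admissibility produced by the first perturbation survives. Openness of $(A_j,E)$-admissibility and of $(d,E)$-regularity allows me to choose the perturbation small enough to preserve everything gained at stages $1,\ldots,j$, and to stay inside $\mathcal{U}$. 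Because $\{D_n\}$ is locally finite, the sequence $\{J_s^{(j)}\}$ stabilizes on every compact subset of $X\times[0,1]$, and the limit $\{J_s'\}=\lim_{j\to\infty}\{J_s^{(j)}\}$ has all the required properties.

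The main obstacle is the parametrized transversality step: one must verify that, for perturbations of $J_s$ supported in $D_{j+1}\times(0,1)$, the linearization of the universal $\bar\partial$-operator is surjective at every somewhere injective $J_s$-holomorphic torus in class $e$ meeting $B_{j+1}$. This is essentially the classical argument of McDuff–Salamon, but one must check two points: that the support restriction to $D_{j+1}$ is compatible with surjectivity, which follows from somewhere injectivity together with the fact that the curve meets $B_{j+1}\subset D_{j+1}$; and that the $\sigma$-equivariance constraint on $B$ does not obstruct the perturbation, which follows because $(d,E)$-regularity lets one first perturb freely on $D_{j+1}\setminus\sigma(D_{j+1})$ (or, if $D_{j+1}$ is disjoint from $\sigma(D_{j+1})$, on all of $D_{j+1}$) and then define the perturbation on $\sigma(D_{j+1})$ by $\sigma$-pullback without affecting the moduli spaces through $B_{j+1}$.
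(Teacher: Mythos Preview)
Your proposal is correct and follows essentially the same approach as the paper: the paper's own proof simply states that one repeats the argument of Lemma~\ref{lem: Z2 admissibility} verbatim, replacing $J$ by $\{J_s\}$ and $\mathcal{J}(X,\omega,N)$ by $\mathcal{J}(X,\{\omega_s\},J_0,J_1,N)$. You have spelled out the parametrized details (cutting off the perturbation in $s$ near the endpoints, invoking the parametrized Sard--Smale argument, and using Lemma~\ref{lem: openness of regularity for family} in place of Lemma~\ref{lem: openness of regularity}) that the paper leaves implicit, but the strategy is identical.
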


\begin{proof}
The proof follows verbatim as the proof of lemma \ref{lem: Z2 admissibility}. One only needs to change the notation $J$ to $\{J_s\}$, and change $\mathcal{J}(X,\omega,N)$ to $\mathcal{J}(X,\{\omega_s\},J_0,J_1,N)$.
\end{proof}

Combining the results above, one obtains the following lemma.
\begin{Lemma} \label{lem: regularity and admissibility for family}
Let $e\in H_2(X;\mathbb{Z})$ be a primitive class. Let $B$ be a closed subset of $X$ such that $\sigma(B)=B$. Assume $\omega_s$ $(s\in[0,1])$ is a smooth family of symplectic forms on $X$ such that $\sigma^*(\omega_s)=-\omega_s$ for each $s$. Let $d, N>0$ be constants. Let $E$ be a positive constant such that $E>\langle[\omega_s],e\rangle$ for every $s$. For $i=0,1$, assume $J_i\in \mathcal{J}(X,\omega_i,N)$ is $E$-admissible and $(d,E)$-regular with respect to $B$. Let $\mathcal{J}$ be the subset of elements $\{J_s\}$ of $\mathcal{J}(X,\{\omega_s\},J_0,J_1,N)$ such that for each $s$, the almost complex structure $J_s$ is $(d,E)$-regular with respect to $B$, and $\sigma^*(J_s)=-J_s$ on $B$. If $\mathcal{J}$ is not empty, let $\mathcal{U}\subset\mathcal{J}$ be the subset of $\mathcal{J}$, such that for every $\{J_s\}\in \mathcal{U}$, the moduli space $\mathcal{M}(X,\{J_s\},e)=\coprod_{s\in [0,1]}\mathcal{M}(X,J_s,e)$ has the structure of a smooth 1-manifold with boundary $\mathcal{M}(X,J_0,e) \cup \mathcal{M}(X,J_1,e)$. Then $\mathcal{U}$ is open and dense. Moreover, if $f:X\to \mathbb{R}$ is a smooth proper function on $X$, then the function defined as
\begin{align*}
\mathfrak{f}:\mathcal{M}(X,\{J_s\},e) & \to \mathbb{R} \\
                     C   & \mapsto \big(\int_C f \, dA\big)/\big(\int_C 1 \, dA\big).
\end{align*}
is a smooth proper function on $\mathcal{M}(X,\{J_s\},e)$, where $dA$ is the area form of $C$. \qed
\end{Lemma}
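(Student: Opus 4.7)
The plan is to combine Lemma \ref{lem: regularity of moduli space for the noncompact case} with the equivariant perturbation machinery of Lemmas \ref{lem: openness of regularity for family} and \ref{lem: Z2 admissibility for family}. Openness of $\mathcal{U}$ within $\mathcal{J}$ should be routine once the moduli space is known to be compact: the Gromov compactness argument used in Lemma \ref{lem: regularity of moduli space for the compact case} applies here because $\pi_2(X)=0$, the class $e$ is primitive, and $\langle c_1(T^{0,1}X),e\rangle=0$, so by the adjunction formula the excess term $\kappa$ vanishes and every limit curve is embedded. Granted compactness, surjectivity of the parameterized linearization at each element of the moduli space is an open condition in $\mathcal{J}$, which yields openness of $\mathcal{U}$.

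For density, my plan is to start with an arbitrary $\{J_s\}\in\mathcal{J}$ and first apply Lemma \ref{lem: Z2 admissibility for family} to perturb it within $\mathcal{J}$ to a family $\{J_s'\}$ for which each $J_s'$ is additionally $E$-admissible. Condition 2 of Definition \ref{def: admissible}, combined with primitivity of $e$ and the absence of $J_s'$-holomorphic spheres (from $\pi_2(X)=0$), then ensures that every $J_s'$-holomorphic torus in class $e$ is somewhere injective. I would then perturb once more via the diagonal argument of Lemma \ref{lem: diagonal argument} to achieve surjectivity of the parameterized linearization at every curve in the moduli space, using the standard parametric transversality theorem on the open sets of the cover. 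The 1-manifold structure and the boundary identification follow from Lemma \ref{lem: regularity of moduli space for the noncompact case}, and properness of $\mathfrak{f}$ follows by combining properness of $f$ with the diameter bound of Lemma \ref{lem: bound on diameters of J curve}.

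The main obstacle I expect is executing the transversality perturbation while preserving the equivariance $\sigma^*(J_s)=-J_s$ on $B$. The key observation that resolves it is that $\sigma_*(e)=-e$, so any $J_s'$-holomorphic curve $C$ in class $e$ is distinct from $\sigma(C)$ as a current. Since $\{J_s'\}\in\mathcal{J}$ is $(d,E)$-regular with respect to $B$, each such $C$ is either at distance greater than $d$ from $\sigma(C)$ or at distance greater than $d$ from $B$. In the first case a transversality perturbation supported in a small neighborhood of $C$ disjoint from $\sigma(C)$ can be extended by pushforward under $\sigma$ to a perturbation near $\sigma(C)$, with the two supports decoupled so that the transversality argument at $C$ is unaffected. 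In the second case the perturbation can be supported entirely away from $B$, hence is automatically compatible with the equivariance constraint. This spatial separation of $C$ from the equivariance locus is what allows the generic transversality perturbation to be carried out within $\mathcal{J}$.
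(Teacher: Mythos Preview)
Your proposal is correct and follows essentially the same route as the paper. The paper's own proof is three sentences: openness from Lemma \ref{lem: openness of regularity for family}, density from Lemma \ref{lem: Z2 admissibility for family}, and properness of $\mathfrak{f}$ from Lemma \ref{lem: regularity of moduli space for the noncompact case}. Your third paragraph unpacks the mechanism behind Lemma \ref{lem: Z2 admissibility for family} --- the $(d,E)$-regularity hypothesis decouples the perturbation near $C$ from the equivariance constraint near $\sigma(C)$ --- and this is exactly the content of the proof of Lemma \ref{lem: Z2 admissibility} and its parametrized version.

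Two small remarks. First, your two-step density argument is slightly redundant: in Lemma \ref{lem: Z2 admissibility for family} the phrase ``$\{J_s'\}$ is $E$-admissible'' refers to the \emph{parametrized} admissibility (the proof instructs one to replace $J$ by $\{J_s\}$ throughout the proof of Lemma \ref{lem: Z2 admissibility}), which already includes surjectivity of the parametrized linearization. So your step (a) already gives the 1-manifold structure, and step (b) is not needed as a separate perturbation. Second, the hypotheses $\sigma_*(e)=-e$ and $\langle c_1(T^{0,1}X),e\rangle=0$ that you invoke are not among the stated assumptions of this lemma; they hold in the application in Section \ref{sec: proof}, but the argument here does not require them, since the separation of $C$ from $\sigma(C)$ (or of $C$ from $B$) comes directly from $(d,E)$-regularity rather than from any homological identity.
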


\begin{proof}
The openness of $\mathcal{U}$ follows from lemma \ref{lem: openness of regularity for family}. The fact that $\mathcal{U}$ is dense follows from lemma \ref{lem: Z2 admissibility for family}. The properness of the function $\mathfrak{f}$ was proved in lemma \ref{lem: regularity of moduli space for the noncompact case}.
\end{proof}

The following lemma controls the location of pseudo-holomorphic curves after perturbation of the almost complex structure.
\begin{Lemma} \label{lem: existence of J curve is a closed condition}
Let $(X,\omega)$ be a symplectic manifold, let $J\in \mathcal{J}(X,\omega,N)$. Let $E>0$ be a positive constant, and let $B$ be a closed subset of $X$.
Assume that there is no non-constant $J$-holomorphic map $\rho$ from a torus to $X$, such that $\textrm{Im}(\rho)\cap B$ is nonempty and the topological energy of $\rho$ is no greater than $E$. Then there is an open neighborhood $\mathcal{U}$ of $J$ in $\mathcal{J}(X,\omega,N)$, such that for every $J'\in \mathcal{U}$, there is no embedded $J'$-holomorphic torus in $X$ intersecting $B$ with energy less than or equal to $E$.
\end{Lemma}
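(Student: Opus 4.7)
The plan is to argue by contradiction using Gromov compactness, together with a box-topology/diagonal construction to handle the possibly non-compact subset $B$. First, fix a complete background metric $g_0$ on $X$ that dominates $g_J$, and that also dominates $g_{J'}$ for $J'$ in some open neighborhood of $J$ in $\mathcal{J}(X,\omega,N)$. By Lemma \ref{lem: bound on diameters of J curve}, there is a uniform constant $M=M(N,E)$ so that any $J'$-holomorphic curve with energy at most $E$ in this neighborhood has $g_0$-diameter at most $M$. Cover $X$ by a locally finite countable family of compact sets $\{A_n\}$ arranged so that the $(M+1)$-neighborhood of $B_n:=B\cap A_n$ is contained in $A_n$. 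Any embedded $J'$-holomorphic torus of energy at most $E$ meeting $B$ then meets some $B_n$ and is entirely contained in $A_n$.

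The core local claim is that for each $n$ there exists an open neighborhood $\mathcal{U}_n$ of $J|_{A_n}$ in $\mathcal{J}(A_n,\omega)$ such that no $J'\in\mathcal{J}(X,\omega,N)$ with $J'|_{A_n}\in\mathcal{U}_n$ admits an embedded $J'$-holomorphic torus of energy at most $E$ lying in $A_n$ and meeting $B_n$. Suppose the claim fails. Then one finds $J_k$ with $J_k|_{A_n}\to J|_{A_n}$ in $C^\infty$ and embedded $J_k$-holomorphic tori $C_k\subset A_n$ of energy at most $E$ meeting $B_n$ at points $p_k$. Compactness of $B_n$ yields $p_k\to p_\infty\in B_n$ on a subsequence, and Gromov's compactness theorem (applicable by the uniform bounded geometry and the diameter bound) produces a further subsequence converging as stable maps to a $J$-holomorphic stable map $u_\infty$ of arithmetic genus one whose image contains $p_\infty$.

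The crux is then to extract from $u_\infty$ a non-constant torus component whose image contains $p_\infty$. The monotonicity of area used in Lemma \ref{lem: bound on diameters of J curve} forces each $C_k$ to have area bounded below by a positive constant depending only on $N$, so the total energy of $u_\infty$ is strictly positive and $u_\infty$ is non-constant. Since $\pi_2(X)=0$, every sphere component of $u_\infty$ is constant, and the arithmetic genus constraint then forces at least one non-constant torus component $T$. Because every constant sphere component adjacent (in the dual graph of $u_\infty$) to $T$ is pinned to a single point of $T$, the accumulation point $p_\infty$ necessarily lies on the image of some non-constant torus component. Normalizing that component gives a non-constant $J$-holomorphic map from a torus to $X$ with energy at most $E$ and image meeting $B$, contradicting the standing hypothesis on $J$.

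Finally, set $\mathcal{U}=\varphi^{-1}\bigl(\prod_n\mathcal{U}_n\bigr)\cap\mathcal{J}(X,\omega,N)$, where $\varphi$ is the restriction embedding appearing in the definition of the box topology on $\mathcal{J}(X,\omega)$. Then $\mathcal{U}$ is open by construction, and any $J'\in\mathcal{U}$ satisfies the conclusion: a hypothetical embedded $J'$-holomorphic torus of energy at most $E$ meeting $B$ would meet some $B_n$, be contained in $A_n$, and contradict the defining property of $\mathcal{U}_n$. The main obstacle in this plan is the Gromov compactness step, specifically the bookkeeping needed to guarantee that the limit stable map carries a non-constant torus component whose image passes through $p_\infty$; once that is established, the remainder is routine use of the uniform diameter bound and the box-topology formalism already developed in Lemma \ref{lem: diagonal argument}.
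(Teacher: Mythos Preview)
Your proof is correct and follows essentially the same route as the paper's: localize using a locally finite compact cover and the uniform diameter bound, establish a local claim for each piece by contradiction via Gromov compactness, and then assemble the global open neighborhood through the box topology. Your cover is organized slightly differently (you cover $X$ and intersect with $B$, whereas the paper covers $B$ and then enlarges by $M(N,E)+1$), and you give more detail than the paper does on why the Gromov limit yields a non-constant torus component through $p_\infty$; the paper simply asserts this, relying implicitly on $\pi_2(X)=0$ and conservation of energy. These are cosmetic differences only.
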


\begin{proof}
Cover the set $B$ by a locally finite family of compact subsets $B_n$. Let $M(N,E)$ be the upper bound given by lemma \ref{lem: bound on diameters of J curve} for geometry bound $N$ and energy bound $E$. Let $A_n$ be the closed $M(N,E)+1$-neighborhood of $B_n$. 
One claims that there is an open neighborhood $\mathcal{U}_n$ of $J|_{A_n}\in \mathcal{J}(A_n,\omega)$ such that for every $J'\in \mathcal{J}(A_n,\omega,N)$, if $J'|_{A_n}\in \mathcal{U}_n$, then there is no embedded $J'$-holomorphic torus in $X$ intersecting $B_n$ with topological energy less than or equal to $E$. Assume the result does not hold, then there is a sequence of $J_n\subset \mathcal{J}(A,\omega,N)$ such that for each $n$ there exists a $J_n$-holomorphic map $\rho_n$ from a torus to $X$ which intersects $B$ and has topological energy less than or equal to $E$, and $J_n|_{A_n}\to J|_{A_n}$. 
For sufficiently large $n$, the distance between $\partial A_n$ and $B_n$ is greater than $M(N,E)$ with respect to the distance given by $J_n$, therefore the relevent $J_n$-holomorphic curve is contained in $A_n$. By Gromov's compactness theorem, a subsequence of $\rho_n$ will give a non-constant $J$-holomorphic map from a torus to $A_n$, such that the intersection $\textrm{Im}(\rho)\cap B$ is nonempty, and the topological energy of $\rho$ is less than or equal to $E$, which is a contradiction. Therefore, the claim holds. The result of the lemma then follows from part 1 of \ref{lem: diagonal argument}.
\end{proof}

With the preparations above, one can now give the proofs of lemma \ref{lem: Z2 admissibility of Ji} and lemma \ref{lem: Z2 admissibility of a family for taut foliation}.

\begin{proof}[Proof of lemma \ref{lem: Z2 admissibility of Ji}]
By the definition of the set $U_i$, the almost complex structure $J_i$ is $(d,E)$-regular for some constant $d>0$ with respect to $\overline{U_i}$. Apply lemma \ref{lem: Z2 admissibility} for $B=\overline{U_i}$, there is a perturbation $J_i'\in \mathcal{J}(X,\Omega_i,N)$ of $J_i$, such that $J_i'$ is $E$-admissible and $\sigma^*(J_i')=-J_i'$ on $\overline{U_i}$. Let $W_i$ be a small compact neighborhood of the union of lifts of Klein-bottle leaves such that $\sigma(W_i)=W_i$. The almost complex structure $J_i'$ can be taken to be equal to $J_i$ on $W_i$ since $J_i$ is already $(W_i,E)$-admissible. By the definition of the set $U_i$, every $J_i$-holomorphic map from a torus to $X$ is either a lift of Klein-bottle leaf or is mapped into the set $U_i$. Therefore lemma \ref{lem: existence of J curve is a closed condition} shows that when the perturbation is sufficiently small, every $J_i'$-holomorphic torus with homology class $e$ is either contained in $U_i$ or is contained in $W_i$. In the latter case the curve is contained in $\widetilde{Y}\times\{0\}$ and it is a lift of a Klein-bottle leaf of $\mathcal{L}_i$ in class $A$. Since $J_i'$ is $(d,E)$-regular with respect to $\overline{U_i}$, for every $J_i'$ holomorphic torus $C$ in $U_i$ one has $\sigma(C)\neq C$.
\end{proof}

\begin{proof}[Proof of lemma \ref{lem: Z2 admissibility of a family for taut foliation}]
The almost complex structures $J_0'$ and $J_1'$ can be connected by a smooth family of almost complex structures $J_s'\in \mathcal{J}(X, \Omega_s,J'_0,J'_1, N)$ such that $\sigma^*(J_s')=-J_s'$ on $V$. Use lemma \ref{lem: regularity and admissibility for family}
, the family $J_s'$ can be further perturbed to satisfy the desired conditions.
\end{proof}

\section{An example} \label{sec: an example}
This section gives an example of a taut foliation with an odd number of Klein-bottle leaves such that every closed leaf is nondegenerate. By corollary \ref{cor: existence of Klein bottle leaf}, every deformation of such a foliation via taut foliations has at least one Klein-bottle leaf.

Think of the torus $T_0=S^1\times S^1$ as a trivial $S^1$-bundle over $S^1$. Let $z_1,z_2\in\mathbb{R}/2\pi$ be the coordinates of the two $S^1$ factors, where $z_1$ is the coordinate for the fiber, and $z_2$ is the coordinate for the base. Let $\gamma$ be a closed curve on the base that wraps the $S^1$ once in the positive direction. Take a horizontal foliation $\hat{\mathcal{I}}$ on $T_0$ such that the holonomy along $\gamma$ has two fixed points: $z_1=0$ and $z_1=\pi$, and that holonomy map has nontrivial linearization at these two points. Moreover, choose $\hat{\mathcal{I}}$ so that it is invariant under the map $(z_1,z_2)\mapsto (z_1+\pi, \pi-z_2)$ and the map $(z_1,z_2)\mapsto (z_1,z_2+\pi)$.

Consider the pull back of the foliation $\hat{\mathcal{I}}$ to $T_0\times S^1$. Let $z_3\in\mathbb{R}/2\pi$ be the coordinate for the $S^1$ factor, then $\textrm{span}\{\hat{\mathcal{I}},\frac{\partial}{\partial z_3}\}$ defines a foliation $\mathcal{I}$ on $T_0\times S^1$. The foliation $\mathcal{I}$ is invariant under the maps 
\begin{align*}
\sigma_1:(z_1,z_2,z_3) &\mapsto (z_1+\pi, \pi-z_2, z_3)\\
\sigma_2:(z_1,z_2,z_3) &\mapsto (z_1, z_2+\pi, \pi-z_3)\\
\sigma_3:(z_1,z_2,z_3) &\mapsto (z_1+\pi, -\pi-z_2, \pi-z_3)
\end{align*}
The set $V=\{\textrm{id}, \sigma_1,\sigma_2,\sigma_3\}$ is a group acting freely and discountinuously on $T_0\times S^1$ and it preserves the coorientation of $\mathcal{I}$. The quotient foliation $\mathcal{I}/V$ has exactly one Klein-bottle leaf and it is nondegenerate. Therefore, one has the following result.
\begin{Proposition}
Every deformation of $\mathcal{I}/V$ through taut foliations must have at least one Klein-bottle leaf. \qed
\end{Proposition}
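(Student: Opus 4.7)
My plan is to derive this proposition directly from Corollary \ref{cor: existence of Klein bottle leaf} by verifying its hypotheses for the foliation $\mathcal{I}/V$ on the quotient $3$-manifold $Y=(T_0\times S^1)/V$. The three items to check are that $\mathcal{I}/V$ is a smooth cooriented taut foliation, that it has exactly one Klein-bottle leaf $K$ (so the count in the class $A:=PD[K]$ is odd), and that $K$ has nontrivial linear holonomy. Smoothness and coorientability are immediate from the hypothesis that $V$ acts freely and properly discontinuously on $T_0\times S^1$ while preserving the coorientation of $\mathcal{I}$.

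For tautness, since $\hat{\mathcal{I}}$ is horizontal with respect to the $S^1$-bundle $T_0\to S^1_{z_2}$, the vector field $\partial/\partial z_1$ is everywhere transverse to $\hat{\mathcal{I}}$, and hence to $\mathcal{I}=\textrm{span}\{\hat{\mathcal{I}},\partial/\partial z_3\}$. Each $\sigma_i\in V$ maps $z_1$-orbits to $z_1$-orbits, so the transverse $z_1$-circles in $T_0\times S^1$ descend to embedded transverse loops through every point of $Y$. For the Klein-bottle count, the closed leaves of $\mathcal{I}$ are exactly the two tori $T^0=\{z_1=0\}\times S^1\times S^1$ and $T^\pi=\{z_1=\pi\}\times S^1\times S^1$, arising from the two fixed points of the $\hat{\mathcal{I}}$-holonomy along $\gamma$. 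A direct check shows that $\sigma_1$ and $\sigma_3$ swap $T^0$ with $T^\pi$, while $\sigma_2$ preserves each and restricts to it as the free orientation-reversing involution $(z_2,z_3)\mapsto(z_2+\pi,\pi-z_3)$. The $V$-orbit $\{T^0,T^\pi\}$ therefore descends to a single Klein-bottle leaf $K$, and since any closed leaf of $\mathcal{I}/V$ lifts under the finite covering $T_0\times S^1\to Y$ to a finite union of closed leaves of $\mathcal{I}$, this $K$ is the only Klein-bottle leaf (indeed, the only closed leaf) of $\mathcal{I}/V$.

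For nondegeneracy, the hypothesis that the linearization of the $\hat{\mathcal{I}}$-holonomy at the fixed point $z_1=0$ is nontrivial implies that $T^0$ has nontrivial linear holonomy along its $z_2$-circle. Under the double cover $T^0\to K$, this translates into a closed loop on $K$ along which $\mathcal{I}/V$ has nontrivial linear holonomy, so $K$ is nondegenerate. With all three hypotheses of Corollary \ref{cor: existence of Klein bottle leaf} verified for $A=PD[K]$, the corollary immediately yields the proposition. The only delicate point I anticipate is the Klein-bottle count, which depends crucially on $\sigma_1$ (and $\sigma_3$) carrying $T^0$ off itself to $T^\pi$; once that identification of orbits is in hand, the remaining verifications are routine.
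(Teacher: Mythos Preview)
Your proposal is correct and follows exactly the approach the paper takes: the paper simply asserts (in the paragraph preceding the proposition) that $\mathcal{I}/V$ has a unique, nondegenerate Klein-bottle leaf and then invokes Corollary~\ref{cor: existence of Klein bottle leaf}, leaving the verification of these facts to the reader. You have supplied that verification in detail---the orbit structure of $V$ on the two closed torus leaves, the identification of the stabilizer $\langle\sigma_2\rangle$ acting as a free orientation-reversing involution, the descent of the $\partial/\partial z_1$ transverse circles, and the survival of the nontrivial linear holonomy---so your argument is a fleshed-out version of the paper's one-line justification.
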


\bibliographystyle{amsplain}
\bibliography{references}

\end{document}